\documentclass{amsart}
\usepackage{graphicx}
\usepackage{latexsym}
\usepackage{color}
\usepackage{amsfonts}
\usepackage[all]{xy}
\usepackage{amssymb, mathrsfs, amsfonts, amsmath}
\usepackage{amsbsy}
\usepackage{amsfonts}
\newtheorem{corollary}{Corollary}

\newtheorem{lemma}{Lemma}
\newtheorem{proposition}{Proposition}
\newtheorem{remark}{Remark}
\newtheorem{theorem}{Theorem}

\numberwithin{equation}{section}

\newcommand{\be}{\begin{equation}}
	\newcommand{\ee}{\end{equation}}
\newcommand{\ben}{\begin{enumerate}}
	\newcommand{\een}{\end{enumerate}}
\newcommand{\beq}{\begin{eqnarray}}
	\newcommand{\eeq}{\end{eqnarray}}
\newcommand{\beqn}{\begin{eqnarray*}}
	\newcommand{\eeqn}{\end{eqnarray*}}

\usepackage[pagebackref]{hyperref}

\begin{document}
	\title{On a generalized Finsler warped product metrics with vanishing Landsberg curvature}

 \author{Newton Sol\'orzano}
 \address[N. Sol\'orzano]{ILACVN - CICN, Universidade Federal da Integra\c c\~ao Latino-Americana, UNILA}
 \curraddr{Itaipu Parquetec, Foz do Igua\c cu-PR, 85867-970 - Brasil}	\email{nmayer159@gmail.com}

 \author{Dik D. Lujerio Garcia}
 \address[D. Lujerio]{Departamento Académico de Matemática de la Facultad de Ciencias, Universidad Nacional Santiago Ant\'unez de Mayolo, UNASAM}
 \curraddr{Jr. Augusto B.Leguia N° 110, Huaraz - Per\'u}	\email{dlujeriog@unasam.edu.pe}	
 
  \author{V\'ictor Le\'on}
 \address[V. Le\'on]{ILACVN - CICN, Universidade Federal da Integra\c c\~ao Latino-Americana, UNILA}
 \curraddr{Parque tecnol\'ogico de Itaipu, Foz do Igua\c cu-PR, 85867-970 - Brasil}
 \email{vicaml19@gmail.com}

 \author{Alexis Rodr\'iguez Carranza}	
 \address[A. Rodr\'iguez Carranza]{Departamento de Ciencias, Universidad Privada del Norte, UPN}
 \curraddr{Sede San Isidro, Av. El Ejercito 920, Trujillo - Per\'u}	\email{alexis.rodriguez@upn.edu.pe}	
 
	\begin{abstract}
    In this paper, we study weakly orthogonally invariant Finsler metrics and derive explicit expressions for their Berwald and Landsberg curvatures. We then obtain the system of partial differential equations characterizing generalized Finsler warped product metrics on $I \times \mathbb{R}^n$, which form a subclass of weakly orthogonally invariant Finsler metrics, under the conditions of vanishing Berwald and Landsberg curvatures. As an application, we construct examples of non-regular Landsberg metrics that are not of Berwald type.
 
	\end{abstract}
	
	\keywords{Finsler metric, cylindrically symmetric, warped product, Landsberg metric.}   
	\subjclass[2020]{53B40, 53C60}
	\date{\today}

	\maketitle
\section{Introduction}

In the classification of Finsler manifolds, the relationship between Berwald metrics and Landsberg metrics remains one of the most significant open questions in the field. By definition, a Finsler metric is Berwaldian if its Berwald curvature vanishes ($B^i_{jkl} = 0$), and it is Landsbergian if its Landsberg curvature vanishes ($L_{jkl} = 0$). While it is well-established that every Berwald metric is a Landsberg metric, the converse—whether every Landsberg metric is necessarily Berwaldian—has remained an unsolved problem for decades, famously dubbed the search for ``unicorns'' by D. Bao \cite{DBao2007}. 

If one considers Finsler metrics without the regularity condition, the statement is no longer true. In this setting, several interesting examples arise. The first non-regular unicorns were constructed by Asanov \cite{Asanov2006}, and later generalized by Shen \cite{Shen2009}. More recently, additional explicit examples have been provided by Elgendi \cite{Elgendi2021,Elgendi2025,Elgendi2026}. 

Beyond their intrinsic geometric relevance, non-Berwaldian Landsberg metrics have gained increasing attention due to their role in Finsler gravity. Indeed, Finslerian extensions of Einstein's theory have been developed in \cite{PfeiferWohlfarth2012,HohmannPfeiferVoicu2019}, where field equations for Finsler spacetimes are formulated. Within this framework, non-regular Landsberg metrics (unicorns) are of particular importance, since they provide exact solutions of these field equations.

A major recent development in this direction is the construction of explicit cosmological solutions based on unicorn metrics \cite{Heefer2023}. In this work, the authors exhibit a non-Berwaldian Landsberg spacetime with homogeneous and isotropic symmetry, which solves the Finsler gravity field equations and describes a dynamically expanding universe. This result is particularly significant, as it shows that nontrivial cosmological dynamics in Finsler gravity cannot arise in the Berwald setting. Indeed, it has been observed that Berwald-type cosmological solutions are necessarily static, implying that any physically meaningful dynamical cosmological model within the Landsberg class must be non-Berwaldian. 

Moreover, these constructions reveal that physically viable causal structures, such as well-defined light cones, may exist even when the fundamental tensor is not of Lorentzian signature. This highlights the richness of the Finslerian framework and further emphasizes the importance of non-regular unicorn metrics, not only for addressing a long-standing geometric problem, but also for constructing non-trivial and physically meaningful models in Finsler gravity \cite{Heefer2023,HohmannPfeiferVoicu2019,PfeiferWohlfarth2012}.

On the other hand, there exist important Finsler metrics in the literature which satisfy 
\begin{align}\label{eq:1}
F\left((x^0,O\overline{x}),(y^0,O\overline{y})\right)=F\left((x^0,\overline{x}),(y^0,\overline{y})\right), \quad \text{for every } O\in O(n),
\end{align}
where 
$ x=(x^0,\overline{x})\in M=I\times \mathbb{R}^n$ and $y=(y^0,\overline{y})\in T_xM$. Examples include the Shen’s fish tank metric, spherically symmetric metrics, and warped product metrics \cite{HM1,Z,Zhao2018,Kozma2001,Liu2019,marcal2023}. 

A Finsler metric $F$ is called weakly orthogonally invariant \cite{Liu2024} (or cylindrically symmetric  in an alternative terminology in \cite{Solorzano2023}) if it satisfies \eqref{eq:1}. In \cite{Liu2024}, it was shown that such metrics are non-trivial, in the sense that they are not necessarily orthogonally invariant.

In \cite{Solorzano2022}, it was proved that every cylindrically symmetric Finsler metric can be written as 
\[
F(x,y)=\vert \overline{y}\vert \phi\left(x^0,\vert \overline{x}\vert,\frac{\langle\overline{x},\overline{y}\rangle}{\vert\overline{y}\vert},\frac{y^0}{\vert \overline{y}\vert}\right),
\]
and in \cite{Solorzano2023}, necessary and sufficient conditions for this expression to define a Finsler metric were obtained.

Regarding warped product metrics, in \cite{zheng2024} differential equations characterizing Landsberg metrics were derived, leading to new non-regular unicorn examples. In \cite{Elgendi2026}, further explicit families of unicorns were constructed within spherically symmetric metrics. In \cite{Solorzano2022}, it was shown that no regular unicorns exist in a certain warped setting, while in \cite{LiuMo2025}, a system characterizing weakly orthogonal invariant metrics with vanishing Berwald curvature was obtained.

Motivated by these developments, in this paper we compute the Berwald and Landsberg curvatures using a convenient notation ($\Psi$), which significantly simplifies the calculations. By restricting to a subclass of weakly orthogonally invariant Finsler metrics, we prove that there are no regular unicorns within this class.

 \begin{theorem}\label{theo:main}\normalfont
     Let $F=\vert\overline{y}\vert{\phi(x^0,r,z)}$ be a Finsler metric defined on  $M=I\times \mathbb{B}^n(\rho)\subset \mathbb{R}\times\mathbb{R}^n,$ where $ z=\frac{y^0}{\vert\overline{y}\vert}, $ $r=\vert\overline{x}\vert$  and  $TM $  with coordinates   \eqref{coordx}-\eqref{coordy}.  Then $F$ is Landsberg metric if and only if it is Berwald metric.
 \end{theorem}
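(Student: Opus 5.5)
The overall route is to compute the spray of $F=|\overline{y}|\phi(x^0,r,z)$, where $z=y^0/|\overline{y}|$. I will use the general formulas for the Berwald and Landsberg curvatures of weakly orthogonally invariant metrics developed earlier in the paper, written in the $\Psi$ notation.

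\emph{Step 1: the spray coefficients.} Because $\phi$ does not depend on $s=\langle\overline{x},\overline{y}\rangle/|\overline{y}|$, I expect the spray to take the form
\[
G^0=|\overline{y}|^2\,\Psi_0,\qquad G^i=|\overline{y}|\,\Psi_1\,y^i+|\overline{y}|^2\,\Psi_2\,x^i .
\]
Here the $\Psi_a$ are explicit functions of $(x^0,r,s,z)$, built from $\phi$ and its derivatives $\phi_{x^0},\phi_r,\phi_z,\phi_{zz}$. Their dependence on $s$ should be polynomial, at most quadratic. This follows by specializing the general formula to $\phi_s=0$.

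\emph{Step 2: the curvatures.} I substitute these $\Psi_a$ into the paper's expression for $B^i_{jkl}$, and then into
\[
L_{jkl}=-\tfrac12 F F_{y^m}B^m_{jkl}.
\]
Both curvatures decompose along tensors built from $\overline{x}$, $\overline{y}$, $\delta_{ij}$ and the $x^0$-direction. The coefficients are functions of $(x^0,r,s,z)$, and $L=0$ is equivalent to the vanishing of a small number of scalar coefficients, say $L_1,\dots,L_k$. Each of these is a combination of the $\Psi_a$, their $s$- and $z$-derivatives, and $\phi,\phi_z,\phi_{zz}$.

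\emph{Step 3: show $L=0$ implies $B=0$.} The key structural input is that the $\Psi_a$ are polynomial in $s$, while $\phi$ is independent of $s$. I therefore expand each equation $L_m=0$ in powers of $s$ and set each coefficient to zero. This gives a PDE system in $(x^0,r,z)$ alone. From the top-degree coefficients I expect to obtain conditions such as $(\Psi_2)_{zz}=0$ together with relations involving $\phi_{zz}$ and $\Psi_0$. These should force $\Psi_0$, $\Psi_1$ and $\Psi_2$ to be quadratic, linear and constant in $z$, respectively. With those normalizations $G^i$ becomes quadratic in $y$, so $B=0$.

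The key auxiliary fact here is regularity: positive definiteness means $\phi-z\phi_z>0$ and $\phi_{zz}>0$ (or an equivalent condition from the paper's characterization). This lets me divide by these factors to isolate derivatives of the $\Psi_a$. The converse implication, that Berwald implies Landsberg, is standard.

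\emph{Main obstacle.} The hard part is Step 3. One must decouple the Landsberg equations, which are contracted with $F_{y^m}$ and therefore mix components, back into the full set of Berwald equations. I would first use the component of $L$ along $\delta_{ij}$ with $i,j$ transverse to $\overline{x}$ and $\overline{y}$; this exists when $n\ge 3$, and the low-dimensional cases would be handled separately. That component should give a single clean equation. I would then feed it into the remaining components and use the polynomial dependence on $s$ to split them, invoking $\phi_{zz}\neq0$ at each division. If the $s$-splitting leaves a residual ODE in $z$, I would solve it explicitly and check that regularity excludes the non-Berwald branch, as happens in the Asanov–Shen examples.
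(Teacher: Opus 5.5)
Your Steps 1--2 agree with the paper. There $G^0=u^2N$ and $G^i=u^2Wu_i+u^2Ux^i$, with $U_s=W_{ss}=N_{ss}=0$ when $\phi_s=0$. After rotating $\overline{x},\overline{y}$ into $\mathrm{span}(e_1,e_2)$ and using a transverse index, $L=0$ reduces to four scalar equations. The gap is in Step 3. You expect the $s$-splitting, together with divisions by $\Omega=\phi-z\phi_z$ and $\phi_{zz}$, to force the spray to be quadratic, and this is false. The splitting only gives $U=f_1+f_2z+f_3z^2/2$, $N=N_0(x^0,r,z)+f_4sz$ and $W=W_0(x^0,r,z)+f_5s$. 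The remaining system still has a non-Berwald branch: $N_0=f_8+f_9z^2/2$, $W_0=f_{10}+f_{11}z$, which leads to $(f_{11}z+2f_{10})\phi=Q\phi_z$ with $Q=f_{11}z^2+2f_{10}z-2f_8$. For $\Delta=-f_{10}^2-2f_8f_{11}>0$ its solution is $\phi=k\sqrt{\zeta^2+\alpha^2}\,e^{\beta\arctan(\zeta/\alpha)}$, where $\zeta=z+f_{10}/f_{11}$, $\alpha=\sqrt{\Delta}/f_{11}$ and $\beta=f_{10}/\sqrt{\Delta}$. Under the $r$-constraints of Section 5 this $\phi$ is Landsberg, and in general (e.g.\ when $\beta$ depends on $x^0$) it is not Berwald. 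Moreover $\Omega=-2f_8\phi/Q>0$ and $\phi_{zz}=-2f_8f_{11}\phi/Q^2>0$ for every $z$. So the positive-definiteness conditions you single out as ``the key auxiliary fact'' hold identically on this branch, and no pointwise division can eliminate it.

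What excludes this branch is a different, global-in-$z$ regularity requirement: $C^\infty$ smoothness of $F$ at directions with $\overline{y}=0$, $y^0\neq0$, i.e.\ as $z\to\pm\infty$. The paper checks that $\theta(t)=|t|\,\phi(x^0,r,1/|t|)$ is $C^2$ at $t=0$, but $\theta'''$ has one-sided limits $\mp4\alpha\beta\cdot\alpha^2(1+\beta^2)k\,e^{\beta\pi/2}$. Hence $F$ is not even $C^3$ unless $\beta=0$, which is the Riemannian case. Your closing hedge (``check that regularity excludes the non-Berwald branch'') points the right way. But you neither locate the branch nor name the property that rules it out, and this is the crux of the theorem rather than a contingency. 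You also need the rest of the paper's case analysis. One branch yields Randers-type $\phi$, which is disposed of by Matsumoto's theorem that Landsberg Randers metrics are Berwald. The branch $N_{0zzz}\neq0$ yields Riemannian $\phi$. Finally, your target normal form (``$\Psi_2$ constant in $z$'') is not the Berwald condition. For Riemannian $\phi=\sqrt{a(x^0,r)z^2+b(x^0,r)}$ one gets $U=-(a_rz^2+b_r)/(4rb)$. The correct normal form is the joint polynomial one in $(s,z)$ of Theorem~\ref{Thm2:B0}.
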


 In contrast of this theorem, we obtain  non-regular unicorn Finsler metrics: 
 
 Let $k=k(x^0)>0,$ $g_1=g_1(x^0,r),\, g_2=g_2(x^0,r)\neq 0 $ and $g_3=g_3(x^0,r)$ be differentiable functions such that 
 \begin{align}\label{conditions01}
 \Delta = g_1g_3-g_2^2&> 0, & \left[\frac{g_2}{\sqrt{\Delta}}\right]_r&=0, & \left[\frac{g_2^2}{g_1g_3}\right]_r&=0.
 \end{align}
 Then, the Finsler metric $F$ given by
 \begin{align}\label{eq:phinlandsintro}
\displaystyle F(x,y)
&=
k\;
\sqrt{\frac{1}{g_1}\left[g_1(y^0)^2 + 2g_2y^0\vert \overline{y}\vert + 2g_3\vert \overline{y}\vert^2\right]}\;
\times e^{
\frac{g_2}{\sqrt{\Delta}}\,\arctan\!\left(\frac{g_1y^0 + g_2\vert \overline{y}\vert}{\sqrt{\Delta}\vert\overline{y}\vert}
\right)},
\end{align}
is Landsberg metric. Furthermore,  $F$ given by \eqref{eq:phinlandsintro} with the conditions \eqref{conditions01} is Berwald metric if, and only if, 
\begin{align*}
    \left[\frac{g_2}{\sqrt{\Delta}}\right]_{x^0}&=0,& \left[\frac{g_3}{g_2}\right]_{x^0} + 2\frac{k'}{k}\left[\frac{g_3}{g_2}\right]&=0.
\end{align*}

 In Section 2 we give some preliminaries and recall some recent results about weakly orthogonally invariant Finsler metrics. In section 3 we study their Berwald and Landsberg curvatures. Specifically, we obtain their the Berwald and Landsberg curvatures (see Theorem \ref{Bcurv} and Theorem \ref{Lcurv}) in the same section, we give  the PDE system that characterizes  weakly orthogonally invariant Finsler metrics with vanishing Berwald curvature (see Theorem \ref{Thm2:B0}), which is a refinement of Proposition 3.1 in \cite{LiuMo2025}. In section 4 we  prove that if $F=u\phi(x^0,r,z)$ is a non-Randers-Berwald metric, then  $\phi$ must satisfy \eqref{eq:Berwald01} - \eqref{eq:Berwald02}.

Also in Section 4 we prove Theorem \ref{theo:main} and in Section 5 we obtain \eqref{eq:phinlandsintro}.

\section{Preliminaries}

In this section, we give some notations, definitions, and lemmas that will be used in the proof of our main results.
Let $M$ be a manifold, and let $TM=\cup_{x\in M}T_xM$ be the tangent
bundle of $M$, where $T_xM$ is the tangent space at $x\in M$. We
set $TM_o:=TM\setminus\{0\}$ where $\{0\}$ stands for
$\left\{(x,\,0)|\, x\in M,\, 0\in T_xM\right\}$. A {\em Finsler
metric} on $M$ is a function $F:TM\to [0,\,\infty)$ with the
following properties:
\begin{itemize}
    \item[(a)] $F$ is $C^{\infty}$ on $TM_o$;

\item[(b)] At each point $x\in M$, the restriction $F_x:=F|_{T_xM}$ is a
Minkowski norm on $T_xM$.
 
\end{itemize}
Let  $\mathbb{B}^n(\rho)\subset\mathbb{R}^n$ the $n$ dimensional open ball of radius $\rho$ and centered at the origin ($n\geq 2$). Set 
 $M=I\times \mathbb{B}^n(\rho)\subset \mathbb{R}\times\mathbb{R}^n,$ with coordinates on $ TM $
\begin{align}
	x&=(x^0, \overline{x}), \quad \overline{x}=(x^1,\ldots,x^n),\label{coordx}\\
	y&=(y^0, \overline{y}), \quad \overline{y}=(y^1,\ldots,y^n).\label{coordy}
\end{align} 
Throughout our work, the following convention for indices is adopted: 
\begin{align*}
	0\leq&A, B, \ldots \leq n;\\
	1\leq&i,j,\ldots \leq n.
\end{align*} 
Introducing the notation 
\begin{align}\label{rs} 
	 r&:=|\overline{x}|,  &s&:=\frac{\langle \overline{x},\,\overline{y}\rangle}{\vert\overline{y}\vert},&  z&:= \frac{y^0}{\vert\overline{y}\vert},
\end{align} 
where $|\cdot|$ and $\langle\cdot,\cdot\rangle$ are, respectively, the 
standard Euclidean norm and inner product on $\mathbb{R}^n$. 

In \cite{Solorzano2022}, the authors proved that, if the Finsler metric $F$ satisfies \eqref{eq:1}, then there exist a positive function $\phi:\mathbb{R}^4 \to\mathbb{R} $ such that,
\begin{align}\label{def:F}
F(x,y) = \vert \overline{y}\vert \phi(x^0,r,s,z).
\end{align}
On the other hand, defining $ \Omega $ and $ \Lambda $ as,
\begin{align}
	\Omega:=&\phi-s\phi_s-z\phi_z \label{DefOmega},\\
	 \Lambda:=& \Omega \phi_{zz}+(r^2-s^2)(\phi_{ss}\phi_{zz}-\phi^2_{sz}),\label{Def:Lambda}
\end{align}
where, the sub-index $s,z$ are the partial derivatives respect to $s$ and $z$ respectively, in \cite{Solorzano2023} was proved the next result about the necessary and sufficiency condition for the function $F=\vert \overline{y}\vert \phi(x^0,r,s,z)$ to be a Finsler metric.
\begin{proposition}\normalfont
		Let $F=\vert\overline{y}\vert{\phi(x^0,r,s,z)}$ be a Finsler metric defined on $ M $, where $ z=\frac{y^0}{\vert\overline{y}\vert}, $ $r=\vert\overline{x}\vert$, $ s=\frac{\langle\overline{x},\overline{y}\rangle}{\vert\overline{y}\vert} $ and  $TM $  with coordinates   \eqref{coordx}-\eqref{coordy}. Then $  F$ is a Finsler metric if, and only if, the positive function $ \phi $ satisfies  $ \Lambda>0 $ for $ n=2 $  with additional inequality, $ \Omega>0 $ for $ n\geq 3. $
\end{proposition}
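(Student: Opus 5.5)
The plan is to compute the fundamental tensor $g_{AB}=\frac12[F^2]_{y^Ay^B}$ of $F=\vert\overline{y}\vert\phi(x^0,r,s,z)$ explicitly. From it I would extract a closed formula for $\det(g_{AB})$ in terms of $\phi$, $\Omega$ and $\Lambda$, and then use a deformation (continuity) argument to pass from a nonvanishing determinant to positive definiteness.

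\textbf{Step 1 (reduction).} Since condition \eqref{eq:1} holds and $g_{AB}$ transforms tensorially under $(x^0,\overline{x})\mapsto(x^0,O\overline{x})$, I fix $x$ and rotate so that $\overline{x}=(r,0,\dots,0)$. It then suffices to test positivity of $g_y(v,v)$ at such points. Set $u=\vert\overline{y}\vert$. The chain rule uses
\begin{align*}
u_{y^0}&=0, & u_{y^i}&=\frac{y^i}{u}, & s_{y^0}&=0, & s_{y^i}&=\frac{x^i}{u}-\frac{s\,y^i}{u^2}, & z_{y^0}&=\frac1u, & z_{y^i}&=-\frac{z\,y^i}{u^2}.
\end{align*}
It gives $F_{y^0}=\phi_z$ and $F_{y^i}=\Omega\frac{y^i}{u}+\phi_s x^i$. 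Differentiating once more and forming $g_{AB}=FF_{y^Ay^B}+F_{y^A}F_{y^B}$, I expect a decomposition
\[
g_{AB}=\phi\,\Omega\,\bigl(\delta_{AB}-\text{terms supported on }\mathrm{span}\{e_0,\overline{y},\overline{x}\}\bigr)+(\text{a correction of rank}\le 3).
\]
Here $\delta_{AB}$ is restricted to the $\overline{y}$-block.

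\textbf{Step 2 (determinant).} On the $(n-2)$-dimensional subspace of $\{0\}\times\mathbb{R}^n$ orthogonal to $\overline{x}$ and $\overline{y}$, the form $g$ acts as $\phi\Omega\,\delta$. This accounts for the factor $(\phi\Omega)^{n-2}$. On the complementary 3-dimensional space spanned by $y$, $e_0$ and $\overline{x}$, I compute a $3\times3$ determinant. Using homogeneity ($g(y,y)=F^2$ and $g(y,\cdot)=FF_{y}$), I would reduce it to a $2\times 2$ Hessian-type block in the variables $(s,z)$. The goal is
\[
\det(g_{AB})=\phi^{n+2}\,\Omega^{n-2}\,\Lambda ,
\]
with $\Lambda=\Omega\phi_{zz}+(r^2-s^2)(\phi_{ss}\phi_{zz}-\phi_{sz}^2)$, possibly up to a positive factor. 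The $(r^2-s^2)$ factor arises from $\vert\overline{x}\vert^2-\langle\overline{x},\overline{y}\rangle^2/u^2$, the squared length of the component of $\overline{x}$ orthogonal to $\overline{y}$. I expect this bookkeeping to be the main obstacle. I would organize it by first verifying the formula in the special configuration $\overline{y}=(y^1,y^2,0,\dots)$, and then invoking rotational covariance.

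\textbf{Step 3 (positivity).} For necessity, suppose $g$ is positive definite. Restricting to the orthogonal block gives $\phi\Omega>0$ when $n\ge3$, hence $\Omega>0$, and the determinant formula then forces $\Lambda>0$. For $n=2$ the factor $\Omega^{0}$ disappears and only $\Lambda>0$ is forced. This explains the dichotomy in the statement.

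For sufficiency I use the standard continuity lemma. Consider the family $\phi_t=(1-t)\sqrt{1+z^2}+t\phi$, where $t=0$ gives the Euclidean metric, and check that $\Omega_t$ and $\Lambda_t$ stay positive along the path. If that fails, I would instead use the family $\phi_t(s,z)=\phi(ts,z)$ together with a 2D convexity argument in the $(s,z)$-block. Then $\det g_t>0$ for all $t$, the eigenvalues never cross zero, and so $g_1=g$ is positive definite.

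Alternatively, and more cleanly, I would show directly that the $2\times2$ block on $\mathrm{span}\{e_0,\overline{x}^\perp\}$ modulo $y$ is positive definite. Its determinant is $\Lambda$ times a positive factor. Its trace is positive because $\Omega>0$ in dimension $\ge3$, while for $n=2$ positivity of the trace comes from the additional inequality.
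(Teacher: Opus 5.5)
The paper does not prove this proposition; it quotes it from \cite{Solorzano2023}, so your plan is judged on its own. Your necessity argument and your target formula $\det(g_{AB})=\phi^{n+2}\Omega^{n-2}\Lambda$ are correct. Step 2 becomes short if you use, as a complement of $y$, the vectors $e_0$, $\hat v=(0,\overline{x}-s\,\overline{y}/u)/\sqrt{r^2-s^2}$, and $w\perp\overline{x},\overline{y}$. In that basis there are no cross terms, $u\,F_{y^Ay^B}(w,w)=\Omega\vert w\vert^2$, and on $\{e_0,\hat v\}$
\[
u\,F_{y^Ay^B}=A:=\begin{pmatrix}\phi_{zz}&\sqrt{r^2-s^2}\,\phi_{sz}\\ \sqrt{r^2-s^2}\,\phi_{sz}&\Omega+(r^2-s^2)\phi_{ss}\end{pmatrix},\qquad \det A=\Lambda .
\]

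The gap is sufficiency. $\Lambda>0$ only says that $A$ is \emph{definite}, and none of your routes excludes $A$ being negative definite.
\begin{itemize}
\item Along $\phi_t=(1-t)\sqrt{1+z^2}+t\phi$, you get $A_t=(1-t)A_0+tA_1$. So ``$\Lambda_t>0$ for all $t$'' holds exactly when $A_1$ is already positive definite, which is the claim itself.
\item Along $\phi(ts,z)$, you get $\Lambda_t=\Omega\phi_{zz}+(t^2r^2-\sigma^2)(\phi_{ss}\phi_{zz}-\phi_{sz}^2)$ at $\sigma=ts$. This has $t^2r^2$ in place of $r^2$, which the hypothesis does not control; it would need $\Omega\phi_{zz}>0$ for $\vert s\vert<r$. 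Also, its starting metric $u\phi(0,z)$ is not known to be Finsler.
\item ``The trace is positive because $\Omega>0$'' fails pointwise. The trace is $\phi_{zz}+\Omega+(r^2-s^2)\phi_{ss}$, and $\phi_{zz}<0$, $\Omega+(r^2-s^2)\phi_{ss}<0$, $\Omega>0$, $\Lambda>0$ can all hold at one point.
\item For $n=2$ you appeal to an ``additional inequality'' that, by your own reading of the statement, is not assumed.
\end{itemize}

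What closes the argument is a global sign argument. Fix $x$.
\begin{itemize}
\item On the connected set $\{(s,z):\vert s\vert\le r\}$, the symmetric matrix $A$ is continuous with $\det A=\Lambda>0$. Its trace never vanishes, so $A$ is either positive definite everywhere or negative definite everywhere.
\item At $s=\pm r$ (that is, $\overline{y}\parallel\overline{x}$), one has $A=\mathrm{diag}(\phi_{zz},\Omega)$ and $\Lambda=\Omega\phi_{zz}$.
\item For $n\ge 3$, the hypothesis $\Omega>0$ forces $\phi_{zz}>0$ at $s=\pm r$. Hence $A$ is positive definite everywhere, and so is $g$.
\item For $n=2$, the function $z\mapsto\phi_{zz}(x^0,r,\pm r,z)$ never vanishes, so it has constant sign on $\mathbb{R}$. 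It cannot be negative, because a positive function on all of $\mathbb{R}$ cannot be strictly concave. Thus $\phi_{zz}>0$ and $\Omega>0$ at $s=\pm r$, and the same conclusion follows. The same reasoning covers $r=0$, where $A=\mathrm{diag}(\phi_{zz},\Omega)$ identically.
\end{itemize}
Separately, the directions with $\overline{y}=0$ (where $z=\pm\infty$) are not covered by the representation $F=u\phi$. They need either a limiting argument or an explicit hypothesis.
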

The next proposition  gives us one the most important quantities in Finsler Geometry: The geodesic coefficients \begin{align*}
	 G^A=Py^A+Q^A, \end{align*}
where
\begin{align*}
	P:=&\frac{F_{x^C}y^C}{2F}, & Q^A:=\frac{F}{2}g^{AB}\left\{F_{x^Cy^B}y^C-F_{x^B}\right\},
\end{align*}
and $g^{AB}$ is the inverse  of the matrix $g_{AB}$ (see details in  \cite{Solorzano2023}).

\begin{proposition}\label{prop2}\normalfont Let
	$F=\vert\overline{y}\vert{\phi(x^0,r,s,z)}$ be a Finsler metric defined on $ M $, where $ z=\frac{y^0}{\vert\overline{y}\vert}, $ $r=\vert\overline{x}\vert$, $ s=\frac{\langle\overline{x},\overline{y}\rangle}{\vert\overline{y}\vert} $ and  $TM $  with coordinates   \eqref{coordx}-\eqref{coordy}.  Then the geodesic spray coefficients $ G^A $ are given by
	\begin{align}
        	G^0&=u^2N,\label{eq:G^0}\\
		G^i&=u^2Wu_i+ u^2Ux^i,\label{eq:G^i}
	\end{align}
where $ u=\vert\overline{y} \vert$, $u_i=\frac{\partial u}{\partial y^i}=\frac{y^i}{u}$, $ \Omega, \Lambda $ are given in \eqref{DefOmega}, \eqref{Def:Lambda} respectively, and
\begin{align}
N&:=z(W+sU)+L\label{def:N}\\
	W&:=\frac{1}{\phi}\left\{\frac{\varphi}{2}-s\phi U - \phi_zL - (r^2-s^2)\phi_sU\right\},\label{def:W}\\
    L&:=\frac{1}{2 \Lambda}\left\{-(r^2-s^2)\left(\varphi_s-\frac{2}{r}\phi_r\right)\phi_{sz} + \left(\varphi_z-2\phi_{x^0}\right)(\Omega+(r^2-s^2)\phi_{ss})\right\},\label{def:L}\\
 U&:=\frac{1}{2\Lambda}\left\{\left(\varphi_s-\frac{2}{r}\phi_r\right)\phi_{zz}-\left(\varphi_z-2\phi_{x^0}\right)\phi_{sz}\right\},\label{def:U}\\
	V&:=\frac{1}{2\Lambda}\left\{\left(\varphi_s-\frac{2}{r}\phi_r\right)\phi_{sz}-\left(\varphi_z-2\phi_{x^0}\right)\phi_{ss}\right\},\nonumber\\
	\varphi &:=z\phi_{x^0}+\frac{s}{r}\phi_r+\phi_s.\nonumber
    \end{align}
If we consider $\phi=\phi(x^0,r,z),$ we have
\end{proposition}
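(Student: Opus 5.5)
The plan is to verify the formulas for $G^A$ directly from the Euler--Lagrange characterization of the spray. The characterization is
\[
g_{AB}\,G^B=\tfrac14\left\{[F^2]_{x^Cy^A}y^C-[F^2]_{x^A}\right\},
\]
which is equivalent to the definition $G^A=Py^A+Q^A$ recalled above. I avoid inverting $g_{AB}$ explicitly and use the symmetry \eqref{eq:1} instead.

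\textbf{Step 1: reduce to three scalar unknowns.} Since $F$ is invariant under $(x^0,\overline{x},y^0,\overline{y})\mapsto (x^0,O\overline{x},y^0,O\overline{y})$, the spray is equivariant. Hence $G^0$ is an $O(n)$-invariant function, and $(G^i)$ is an $O(n)$-equivariant vector field built only from $\overline{x}$ and $\overline{y}$. By $2$-homogeneity in $y$, this forces
\[
G^0=u^2N,\qquad G^i=u^2(Wu_i+Ux^i),
\]
where $N,W,U$ are functions of $(x^0,r,s,z)$ only. This gives \eqref{eq:G^0}--\eqref{eq:G^i}.

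\textbf{Step 2: derive and solve a $3\times 3$ system.}
\begin{itemize}
\item Write $F^2=u^2\phi^2$ and compute $[F^2]_{y^A}$, $[F^2]_{y^Ay^B}$ and $[F^2]_{x^A}$ via the chain rule. The needed derivatives are $s_{x^i}=u_i$, $s_{y^i}=(x^i-su_i)/u$, $z_{y^0}=1/u$, $z_{y^i}=-zu_i/u$ and $r_{x^i}=x^i/r$.
\item Note that $\varphi$ is exactly $\frac{1}{u}\,y^C\partial_{x^C}\phi$, which explains why it appears.
\item Substitute the ansatz into the Euler--Lagrange equation. The $A=0$ component gives one scalar equation. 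Contracting the $A=i$ components with $u_i$ and with $x^i$ gives two more.
\item Using $\Omega$ and $\Lambda$ from \eqref{DefOmega}--\eqref{Def:Lambda}, the resulting linear system for $(N,W,U)$, or equivalently $(L,W,U)$ with $N=z(W+sU)+L$, has determinant proportional to $\Lambda$.
\item Solve by Cramer's rule: the $2\times2$ block in $\phi_{ss},\phi_{sz},\phi_{zz}$ yields $U$ and $L$ as in \eqref{def:U} and \eqref{def:L}. The remaining equation, the one coming from $y^A$-contraction, i.e.\ $P$, gives $W$ as in \eqref{def:W}.
\end{itemize}

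\textbf{Step 3: specialize to $\phi=\phi(x^0,r,z)$.} Set $\phi_s=\phi_{ss}=\phi_{sz}=0$. Then
\[
\Omega=\phi-z\phi_z,\qquad \Lambda=\Omega\phi_{zz},\qquad \varphi=z\phi_{x^0}+\tfrac{s}{r}\phi_r .
\]
Substituting into the formulas of Step 2 gives
\[
U=-\frac{\phi_r}{r\,\Omega},\qquad L=\frac{z\phi_{x^0z}-\phi_{x^0}}{2\phi_{zz}}.
\]
For the second, note that in $\varphi_z$ the $\phi_r$ term does depend on $z$; keeping track of it gives
\[
\varphi_z-2\phi_{x^0}=z\phi_{x^0z}-\phi_{x^0}+\tfrac{s}{r}\phi_{rz},
\]
so $L$ above should carry the extra term $\frac{s\phi_{rz}}{2r\phi_{zz}}$. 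Similarly $V=\frac{-(\varphi_z-2\phi_{x^0})\cdot 0}{2\Lambda}=0$, and
\[
W=\frac{1}{\phi}\left\{\tfrac{\varphi}{2}-s\phi U-\phi_zL\right\}.
\]

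\textbf{Main obstacle.} I expect the difficulty to be Step 2: organizing the Hessian of $u^2\phi^2$ in the non-orthogonal frame $\{y^0\text{-direction},\,u_i,\,x^i\}$ so that the system visibly has determinant $\Lambda$. My first approach is to work with the decomposition $x^i=su_i+(x^i-su_i)$, where the second piece is orthogonal to $\overline{y}$. This makes the coefficients $(r^2-s^2)\phi_{ss}$ and $(r^2-s^2)\phi_{sz}$ appear naturally.
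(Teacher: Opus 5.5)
Your approach works, but it is not the route behind this proposition. The paper gives no derivation here; it imports the formulas from \cite{Solorzano2023}, where $G^A=Py^A+Q^A$ is evaluated using the inverse matrix $g^{AB}$.

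You instead use $O(n)$-equivariance to reduce to the three scalars $N,W,U$ and solve a $3\times3$ system without ever inverting $g_{AB}$. This is shorter and explains the shape of the answer. The cited computation is more mechanical but yields $g^{AB}$ explicitly.

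Your claims about the system are right, and your ``main obstacle'' disappears if you split $g_{AB}=FF_{y^Ay^B}+F_{y^A}F_{y^B}$:
\begin{itemize}
\item Contracting the Euler--Lagrange equation with $y^A$ gives $F_{y^B}G^B=\frac12F_{x^C}y^C=\frac12u^2\varphi$. With $F_{y^0}=\phi_z$ and $F_{y^i}=\phi_sx^i+\Omega u_i$, this is exactly \eqref{def:W}.
\item What is left is $F_{y^Ay^B}G^B=\frac12\{\partial_{y^A}(u^2\varphi)-2F_{x^A}\}$. Since $F_{y^Ay^B}y^B=0$, this equation sees only $U$ and $L=N-z(W+sU)$, which is why $L$ is the natural unknown.
\item Use $uF_{y^0y^0}=\phi_{zz}$, $uF_{y^0y^i}=\phi_{sz}x^i+\Psi(\phi_z)u_i$ and $uF_{y^iy^j}=\Omega\delta_{ij}+\phi_{ss}x^ix^j+\Psi(\phi_s)(x^iu_j+x^ju_i)+(\Psi(\Omega)-\Omega)u_iu_j$. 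Then the $y^0$-component and the $x^i$-coefficient read
\begin{align*}
(r^2-s^2)\phi_{sz}U+\phi_{zz}L&=\tfrac12\left(\varphi_z-2\phi_{x^0}\right),\\
\left(\Omega+(r^2-s^2)\phi_{ss}\right)U+\phi_{sz}L&=\tfrac12\left(\varphi_s-\tfrac2r\phi_r\right).
\end{align*}
\item This system has determinant $\Lambda$, so Cramer's rule gives \eqref{def:U} and \eqref{def:L}. The $u_i$-coefficient equation is dependent, because both sides are annihilated by $y^A$.
\end{itemize}

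There is, however, a slip in Step 3: you dropped the factor $\frac12$ in $\frac{1}{2\Lambda}$. With $\phi_s=0$ one has $\varphi_s-\frac2r\phi_r=-\frac1r\phi_r$ and $\Lambda=\Omega\phi_{zz}$. Hence $U=-\frac{\phi_r}{2r\Omega}$, as in Corollary~\ref{cor:phi_no_s}, not $-\frac{\phi_r}{r\Omega}$.

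Your corrected $L$, including the $\frac{s}{r}\phi_{rz}$ term, agrees with the Corollary. Your $W=\frac1\phi\{\frac\varphi2-s\phi U-\phi_zL\}$ is also correct. It contains $+\frac{s}{2r}\phi_r$, whereas the Corollary prints $-\frac{s}{2r}\phi_r$.

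A Riemannian test case settles both points. Take $\phi=\sqrt{z^2+h(r)^2}$. Its spray is $G^0=0$ and $G^i=\frac{h'}{rh}\langle\overline{x},\overline{y}\rangle y^i-\frac{h'}{2rh}u^2x^i$, so $N=0$, $W=\frac{sh'}{rh}$ and $U=-\frac{h'}{2rh}$. This confirms your sign in $W$ and the factor $\frac12$ in $U$.
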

\begin{corollary}\label{cor:phi_no_s}\normalfont
Let $F=\vert\overline{y}\vert\,\phi(x^0,r,z)$ be a Finsler metric defined on $M$, where 
$z=\frac{y^0}{\vert\overline{y}\vert}$, $r=\vert\overline{x}\vert$,
$s=\frac{\langle\overline{x},\overline{y}\rangle}{\vert\overline{y}\vert}$ and $TM$ is endowed with coordinates \eqref{coordx}--\eqref{coordy}. Then the geodesic spray coefficients $G^A$ are given by
\begin{align*}
G^0&=u^2N,\\
G^i&=u^2Wu_i+u^2Ux^i,
\end{align*}
where $u=\vert\overline{y}\vert$ and $u_i=\frac{\partial u}{\partial y^i}=\frac{y^i}{u}$, and,
\begin{align*}
W &:= \frac{1}{\phi}\left\{\frac{z}{2}\phi_{x^0}-\frac{s}{2r}\phi_r-s\phi U-\phi_z L\right\},\\
U &:= -\frac{1}{2r}\frac{\phi_r}{\phi-z\phi_z},
\\
L &:= \frac{1}{2\phi_{zz}}\left\{z\phi_{x^0z}+\frac{s}{r}\phi_{rz}-\phi_{x^0}\right\},\\
N&:=z(W+sU)+L.
\end{align*}
\end{corollary}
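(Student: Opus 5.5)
The plan is to specialize Proposition \ref{prop2} to the case $\phi_s=\phi_{ss}=\phi_{sz}=0$ and simplify each of the auxiliary quantities in turn. First, for $\phi=\phi(x^0,r,z)$ we have
\[
\Omega=\phi-z\phi_z,\qquad \Lambda=\Omega\,\phi_{zz},\qquad \varphi=z\phi_{x^0}+\frac{s}{r}\phi_r .
\]
Since $\varphi$ is affine in $s$, its derivatives are
\[
\varphi_s=\frac{\phi_r}{r},\qquad \varphi_z=\phi_{x^0}+z\phi_{x^0z}+\frac{s}{r}\phi_{rz}.
\]
Hence
\[
\varphi_s-\frac{2}{r}\phi_r=-\frac{\phi_r}{r},\qquad \varphi_z-2\phi_{x^0}=z\phi_{x^0z}+\frac{s}{r}\phi_{rz}-\phi_{x^0}.
\]

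Next substitute these into the definitions \eqref{def:U} and \eqref{def:L}, using $\phi_{sz}=\phi_{ss}=0$. In $U$ only the term $(\varphi_s-\frac{2}{r}\phi_r)\phi_{zz}$ survives. Dividing by $2\Lambda=2\Omega\phi_{zz}$ gives
\[
U=-\frac{\phi_r}{2r(\phi-z\phi_z)}.
\]
In $L$ only $(\varphi_z-2\phi_{x^0})\,\Omega$ survives. Dividing by $2\Omega\phi_{zz}$ gives
\[
L=\frac{1}{2\phi_{zz}}\Big\{z\phi_{x^0z}+\frac{s}{r}\phi_{rz}-\phi_{x^0}\Big\}.
\]

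Then substitute into \eqref{def:W}. The term $(r^2-s^2)\phi_sU$ vanishes, and $\varphi/2=\frac{z}{2}\phi_{x^0}+\frac{s}{2r}\phi_r$. This yields
\[
W=\frac{1}{\phi}\Big\{\frac{z}{2}\phi_{x^0}+\frac{s}{2r}\phi_r-s\phi U-\phi_zL\Big\}.
\]
The formula $N=z(W+sU)+L$ is unchanged, and $G^0,G^i$ follow directly from \eqref{eq:G^0}--\eqref{eq:G^i}.

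The main obstacle is a sign discrepancy in the $\phi_r$ term of $W$. A direct substitution gives $+\frac{s}{2r}\phi_r$, whereas the statement has $-\frac{s}{2r}\phi_r$. I would first recheck the definition of $\varphi$ and of $W$ in the source \cite{Solorzano2023}, since one of these may carry a sign that Proposition \ref{prop2} has transcribed differently.

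If the discrepancy persists, I would verify independently. The cleanest check is to recompute $G^i$ directly from $Q^A=\frac{F}{2}g^{AB}\{F_{x^Cy^B}y^C-F_{x^B}\}$ for the simple test case $\phi=\phi(r)$ with $\phi$ depending only on $r$. In that case $U=-\phi_r/(2r\phi)$, $L=0$, and the coefficient of $u_i$ can be compared against the two candidate signs.

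Otherwise the argument is pure substitution, relying on the standing nondegeneracy $\Omega>0$ and $\phi_{zz}>0$, which follows from $\Lambda>0$. This nondegeneracy justifies the cancellations of $\Omega$ and $\phi_{zz}$ above.
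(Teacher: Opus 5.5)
Your substitution is exactly the paper's argument. The paper gives nothing beyond specializing Proposition~\ref{prop2} to $\phi_s=\phi_{ss}=\phi_{sz}=0$, and your reductions of $\Omega$, $\Lambda$, $\varphi_s-\frac{2}{r}\phi_r$, $\varphi_z-2\phi_{x^0}$, $U$ and $L$ are all correct.

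The sign discrepancy you flag is a misprint in the statement, not a gap in your reasoning. The correct term in $W$ is $+\frac{s}{2r}\phi_r$, as you found. The paper itself uses your sign later: \eqref{eq:third} with $\phi_s=0$ reads $z\phi_{x^0}+\frac{s}{r}\phi_r=2\left[W\phi+s\phi U+\phi_zL\right]$, which is precisely your formula for $W$. Equations \eqref{eq:B005} and \eqref{eq:fifth} are read off from that relation.

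Your proposed fallback test $\phi=\phi(r)$ cannot be used. In that case $F=\phi(r)|\overline{y}|$ does not depend on $y^0$, so $\phi_{zz}=0$ and $\Lambda=0$. Then $g_{AB}$ is degenerate, and $L$ is $0/0$ rather than $0$.

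A valid test is the Riemannian metric $F=\sqrt{a(r)(y^0)^2+|\overline{y}|^2}$, i.e.\ $\phi=\sqrt{az^2+1}$ with $a'\neq 0$.
\begin{itemize}
\item Directly, $G^i=\frac{1}{4}g^{iB}\{[F^2]_{x^Cy^B}y^C-[F^2]_{x^B}\}=-\frac{a'}{4r}(y^0)^2x^i$. Hence $U=-\frac{a'z^2}{4r}$ and $W=0$.
\item The corollary's formulas reproduce this $U$ and give $L=\frac{a'sz(az^2+2)}{4ar}$.
\item They also give $-sU-\frac{\phi_zL}{\phi}=-\frac{a'sz^2}{4r(az^2+1)}$ and $\frac{s\phi_r}{2r\phi}=\frac{a'sz^2}{4r(az^2+1)}$.
\end{itemize}
With your sign this yields $W=0$, and then $N=z(W+sU)+L=\frac{a'sz}{2ar}$, which matches the directly computed $G^0=u^2\frac{a'sz}{2ar}$. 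With the printed sign one gets $W=-\frac{a'sz^2}{2r(az^2+1)}\neq 0$, which is wrong.
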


To compute the Berwald and Landsberg curvatures of weakly orthogonally invariant Finsler metrics \eqref{def:F}, we adopt the operator
\[
\Psi(\Theta) = -s \Theta_s - z \Theta_z,
\]
for any differentiable function $\Theta = \Theta(s,z)$.
This notation was introduced in \cite{SolorzanoLujerioLeonRodriguez2026} (see Remark 3), where several of its structural properties were established.

For convenience, we collect below some identities involving $\Psi$ that will be used throughout the computations:
\begin{align}
\Psi\left(z^2\Psi\left(\dfrac{\Theta}{z^2}\right)\right)&=-sz\Psi\left(\dfrac{\Theta_s}{z}\right)-z^2\Psi\left(\dfrac{\Theta_z}{z}\right),\label{psieq1}\\
\dfrac{1}{z}\Psi\left(z^2\Psi\left(\dfrac{\Theta}{z}\right)\right)&=-s\Psi(\Theta_s)-z\Psi(\Theta_z)-z\Psi\left(\dfrac{\Theta}{z}\right),\label{eq:betaa1}\\
\frac{1}{z^2}\Psi(z^2\Psi(\Theta)) &= -3\Psi(\Theta) - s\Psi(\Theta_s) - z\Psi(\Theta_z),\label{eq:betaa2}\\
\Psi\left(\Theta_z\right)&=\Psi_z(\Theta) + \Theta_z,\label{psieq4}\\
z\Psi_z(\Theta)&=\Psi\left(z\Theta_z\right),\label{eq:psiztheta}\\
z\Psi_s\left(\frac{\Theta}{z}\right) &= \Psi (\Theta_s),\label{psieq2}\\
\left(z\Psi\left(\frac{\Theta}{z}\right)\right)_z &= \Psi (\Theta_z),\label{psieq3}
\end{align}

Moreover, the following higher-order identities will also be required:
\begin{align}\label{eq:psi3I}
\frac{1}{z^2}\Psi\left(z^2\Psi\left(z^2\Psi\left(\frac{\Theta}{z^2}\right)\right)\right)&=- 2 \Psi\left(z^2\Psi\left(\frac{\Theta}{z^2}\right)\right) -\frac{s}{z}\Psi\left(z^2\Psi\left(\frac{\Theta_s}{z}\right)\right)  - \Psi\left(z^2\Psi\left(\frac{\Theta_z}{z}\right)\right),
\end{align}

\begin{align}\label{eq:psi3II}
\frac{1}{z^2}\Psi\left(z^2\Psi\left(z^2\Psi\left(\frac{\Theta}{z}\right)\right)\right)&=- 3 \Psi\left(z^2\Psi\left(\frac{\Theta}{z}\right)\right) -\frac{s}{z}\Psi\left(z^2\Psi\left(\Theta_s\right)\right)  - \Psi\left(z^2\Psi\left(\Theta_z\right)\right).
\end{align}
Whit this,  
\begin{align}
	\frac{\partial\Theta}{\partial y^0}&= \frac{\Theta_z}{u},\label{partial0}
	\\
	u\frac{\partial\Theta}{\partial y^l}&=\Theta_sx^l +\Psi(\Theta)u_l,
	\label{thetal}\\
 u\frac{\partial }{\partial y^k } \left(\Theta u_ l\right)&=\Theta\delta_{kl} + \Theta_sx^ku_l+ \frac{1}{z}\Psi\left(z\Theta\right)u_ku_l,\label{eq:Thetaul}\\
     u\frac{\partial }{\partial y^j}\left(\Theta u_ku_l\right)&=\Theta\left(\delta_{jk}u_l \right)_{\overrightarrow{kl}} + \Theta_sx^ju_ku_l + \frac{1}{z^2}\Psi\left(z^2\Theta\right)u_ju_ku_l,\label{eq:Thetaukul}\\
 u\frac{\partial }{\partial y^j}\left(\Theta u_ku_lu_i\right)&=\Theta (\delta_{jk}u_lu_i )_{\overrightarrow{kli}} + \Theta_sx^ju_ku_lu_i + \frac{1}{z^3}\Psi\left(z^3\Theta\right)u_ju_ku_lu_i, \label{eq:tetajkli}
\end{align}
where $(.)_{jkl}$ denotes the cyclic permutation (ex.: $(\delta_{jk}u_lu_i )_{\overrightarrow{kli}}=\delta_{jk}u_lu_i+\delta_{jl}u_iu_k + \delta_{ji}u_ku_l$).
\section{Berwald and Landsberg curvatures}

The Berwald tensor $ \mathcal{B}=\mathcal{B}^{A}_{BCD}\partial_A\otimes dx^B\otimes dx^C\otimes dx^D $ is defined by
\begin{align*}
	\mathcal{B}^{A}_{BCD}:=&\frac{\partial^3 G^A}{\partial y^B\partial y^C\partial y^D}.
\end{align*}
A Finsler metric is called a Berwald metric if $ \mathcal{B}^A_{BCD}=0, $ i.e. the spray coefficients $ G^A=G^A(x,y)$ are quadratic in $ y\in T_x{M} $ at every point $ x\in M. $

The Landsberg tensor $L=L_{ABC}dx^A\otimes dx^B\otimes dx^C$ is defined by

\begin{align*}
	L_{ABC}:=-\frac{1}{4}[F^2]_{y^D}[G^D]_{y^Ay^By^C}.
\end{align*}
A Finsler metric is called a \textit{Landsberg metric} if $ L_{ABC}=0. $ 

By the positive homogeneity of $F$ we have
\[L_{ABC}=\frac{1}{2}FF_{y^D}\mathcal{B}^D_{ABC}=\frac{1}{2}F\left(F_{y^0}\mathcal{B}^0_{ABC} + \sum_i F_{y^i}\mathcal{B}^i_{ABC}\right).\]

\begin{theorem}\label{Bcurv}\normalfont Let $F=\vert\overline{y}\vert{\phi(x^0,r,s,z)}$ be a  Finsler metric defined on $ M $, where $ z=\frac{y^0}{\vert\overline{y}\vert}, $ $r=\vert\overline{x}\vert$, $ s=\frac{\langle\overline{x},\overline{y}\rangle}{\vert\overline{y}\vert} $ and  $TM $  with coordinates   \eqref{coordx}, \eqref{coordy}.  Then the
Berwald curvature of $F$ is given by
\begin{align*}
	\mathcal{B}^0_{000}&=\frac{1}{u}N_{zzz},\\
	\mathcal{B}^0_{00l}&=\frac{1}{u}\left\{N_{szz}x^l+\Psi(N_{zz})u_l\right\},\\
	\mathcal{B}^{0}_{0kl}&=\frac{1}{u}\left\{N_{ssz}x^kx^l + \Psi\left(N_{sz}\right)(x^lu_k)_{\overrightarrow{lk}} + z\Psi\left(\frac{N_z}{z}\right)\delta_{kl} + \frac{1}{z}\Psi\left(z^2\Psi\left(\frac{N_z}{z}\right)\right)u_ku_l \right\},\\
\mathcal{B}^0_{jkl}&=\frac{1}{u}\left\{\frac{N_{sss}}{3}x^jx^kx^l+\Psi(N_{ss})x^jx^ku_l +z\Psi\left(\frac{N_s}{z}\right)x^j\delta_{kl}+\Psi\left(z^2\Psi\left(\frac{N}{z^2}\right)\right)u_j\delta_{kl}  \right.\\
    &\qquad\left.+\frac{1}{z}\Psi\left(z^2\Psi\left(\frac{N_s}{z}\right)\right)x^ju_ku_l+ \frac{1}{3z^2}\Psi\left(z^2\Psi\left(z^2\Psi\left(\frac{N}{z^2}\right)\right)\right)u_ju_ku_l\right\}_{\overrightarrow{jkl}},\\
\mathcal{B}^i_{000}&=\frac{1}{u}\left\{U_{zzz}x^i + W_{zzz}u_i\right\},\\
\mathcal{B}^i_{00l}&=\frac{1}{u}\left\{U_{szz}x^lx^i + \Psi\left(U_{zz}\right)x^iu_l + W_{zz}\delta_{il} + W_{szz}x^lu_ i + \Psi_z\left(W_{z}\right)u_lu_i\right\},\\
\mathcal{B}^i_{0kl}&= \frac{1}{u}\left\{U_{ssz}x^kx^lx^i+z\Psi\left(\frac{U_z}{z}\right)\delta_{kl}x^i+ \frac{1}{z}\Psi\left(z^2\Psi\left(\frac{U_z}{z}\right)\right)u_ku_lx^i\right.\\
&\left.\quad\qquad +W_{ssz}x^kx^lu_i + \frac{1}{z^2}\Psi\left(z^2\Psi\left(W_z\right)\right)u_k u_lu_i \right\}\\
    &\quad +\frac{1}{u}\left\{\Psi(U_{sz})u_kx^lx^i  + W_{sz}x^k\delta_{li} +\frac{1}{z}\Psi\left(zW_{sz}\right)x^lu_ku_i \right\}_{\overrightarrow{kl}}+ \frac{1}{u}\Psi(W_z)(\delta_{il}u_k)_{\overrightarrow{ikl}},\\
\mathcal{B}^i_{jkl}&= \frac{1}{u}\left\{U_{sss}x^jx^kx^l+\frac{1}{z^2}\Psi\left(z^2\Psi\left(z^2\Psi\left(\frac{U}{z^2}\right)\right)\right)u_ju_ku_l\right.\\
 &\quad+\left. \left[\Psi(U_{ss})u_jx^kx^l+z\Psi\left(\frac{U_s}{z}\right)\delta_{jk}x^l+\Psi_s\left(z^2\Psi\left(\frac{U}{z^2}\right)\right)u_ju_kx^l  
 +\Psi\left(z^2\Psi\left(\frac{U}{z^2}\right)\right) \delta_{jk}u_l\right]_{\overrightarrow{jkl}}\right\}x^i
 \\
&\quad +\frac{1}{u}\left\{\left[W_{ss}\delta_{ij}x^kx^l+\Psi_s(W_s)u_iu_jx^kx^l + \frac{1}{z^2}\Psi\left(z^2\Psi\left(W_s\right)\right)x^ju_ku_lu_i\right]_{\overrightarrow{jkl}}\right.\\
&\quad +z\Psi\left(\frac{W}{z}\right)(\delta_{ji}\delta_{kl})_{\overrightarrow{ikl}} + \Psi\left(W_s\right)\left(x^j(u_i\delta_{kl})_{\overrightarrow{ikl}} + x^k(u_j\delta_{il})_{\overrightarrow{ijl}} + x^l(u_i\delta_{jk})_{\overrightarrow{ijk}}\right)\\
&\quad \left.+\frac{1}{z}\Psi\left(z^2\Psi\left(\frac{W}{z}\right)\right)(\delta_{ji}u_ku_l + \delta_{ik}u_lu_j)_{\overrightarrow{ikl}}+W_{sss}x^jx^kx^lu_i +\frac{1}{z^3}\Psi\left(z^2\Psi\left(z^2\Psi\left(\frac{W}{z}\right)\right)\right)u_ju_ku_lu_i\right\},
\end{align*}
where   $u=\vert\overline{y} \vert$, $u_i=\frac{\partial u}{\partial y^i}=\frac{y^i}{u}$, and $N$, $W$ and $U$ are in \eqref{def:N}, \eqref{def:W} and \eqref{def:U} respectively, and $(.)_{jkl}$ denotes the cyclic permutation.
\end{theorem}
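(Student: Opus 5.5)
We obtain the Berwald curvature by differentiating the spray coefficients of Proposition~\ref{prop2} three times in $y$,
\[
G^0=u^2N,\qquad G^i=u^2Wu_i+u^2Ux^i .
\]
The functions $N,W,U$ depend on $y$ only through $s$, $z$ and $u$. Every derivative can therefore be computed with the chain rules \eqref{partial0}--\eqref{eq:tetajkli}, which express the $y$-derivatives of functions of $(s,z)$ times monomials in $u_l$ via $\Theta_s$, $\Theta_z$ and $\Psi$. Since $G^A$ is positively homogeneous of degree two, each third derivative is homogeneous of degree $-1$. This accounts for the overall factor $1/u$, and we work with the degree-zero quantities $G^0/u^2=N$, etc.

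\textbf{The components $\mathcal{B}^0$.} We start from $\partial_{y^0}(u^2N)=uN_z$, using \eqref{partial0}. Two further $y^0$-derivatives give $N_{zz}$ and then $N_{zzz}/u$, which is $\mathcal{B}^0_{000}$. For the mixed components we differentiate $uN_z$ and $N_{zz}$ in $y^l$ using \eqref{thetal}, and use $\partial u/\partial y^l=u_l$.
\begin{itemize}
\item For $\mathcal{B}^0_{00l}$: \eqref{thetal} applied to $N_{zz}$ gives $(N_{szz}x^l+\Psi(N_{zz})u_l)/u$ directly.
\item For $\mathcal{B}^0_{0kl}$: we write $\partial_{y^l}(uN_z)=N_{sz}x^l+(N_z+\Psi(N_z))u_l$. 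The combination $N_z+\Psi(N_z)$ is recognised as $z\Psi(N_z/z)$, since $\Psi(1/z)=1/z$. A further $y^k$-derivative then uses \eqref{thetal} and \eqref{eq:Thetaul}; the coefficient of $u_ku_l$ comes out as $\frac1z\Psi(z^2\Psi(N_z/z))$.
\item For $\mathcal{B}^0_{jkl}$: we first compute $\partial_{y^l}(u^2N)=uN_sx^l+u\,z^2\Psi(N/z^2)u_l$, using $2N+\Psi(N)=z^2\Psi(N/z^2)$. We then iterate with \eqref{thetal}, \eqref{eq:Thetaul}, \eqref{eq:Thetaukul}, and symmetrize in $j,k,l$, since the Berwald tensor is totally symmetric. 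The factors $1/3$ in front of the $x^jx^kx^l$ and $u_ju_ku_l$ terms compensate for the cyclic sum.
\end{itemize}

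\textbf{The components $\mathcal{B}^i$.} Here we treat the two pieces $u^2Ux^i$ and $u^2Wu_i$ separately.
\begin{itemize}
\item The piece $u^2Ux^i$ behaves exactly like the $N$-computation with $N$ replaced by $U$, multiplied by the constant $x^i$. This gives the $x^i$-blocks of all four formulas.
\item For the piece $u^2Wu_i=uWy^i$ we differentiate with \eqref{eq:Thetaul}, \eqref{eq:Thetaukul}, \eqref{eq:tetajkli}. These generate Kronecker deltas $\delta_{il}$ and the operators $\frac1z\Psi(z\cdot)$, $\frac1{z^2}\Psi(z^2\cdot)$, $\frac1{z^3}\Psi(z^3\cdot)$.
\item Mixed $y^0$/$y^l$ derivatives are commuted using \eqref{psieq4}--\eqref{psieq3}. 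For example, $\Psi(W_z)$ arising as $(z\Psi(W/z))_z$ gives $\Psi_z(W_z)$ in $\mathcal{B}^i_{00l}$.
\end{itemize}

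\textbf{Main obstacle.} The hardest step is $\mathcal{B}^i_{jkl}$, and to a lesser extent $\mathcal{B}^0_{jkl}$. There the successive applications produce many terms whose $\Psi$-coefficients must be rewritten in the nested forms stated. We plan to organise the computation by the tensor type of each term: $x^jx^kx^l$, $x\,x\,u$, $x\,\delta$, $u\,\delta$, $x\,u\,u$, $u\,u\,u$, each with or without $u_i$ or $\delta_{i\cdot}$. For each type we collect the scalar coefficient and simplify it using \eqref{psieq1}--\eqref{eq:betaa2} and the triple identities \eqref{eq:psi3I}, \eqref{eq:psi3II}; the latter convert the $u_ju_ku_l(u_i)$ coefficients into $\frac{1}{z^2}\Psi(z^2\Psi(z^2\Psi(\cdot)))$ form. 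As a consistency check, we verify at each stage that the result is symmetric in the lower indices. We also check that contracting with $y^B$ yields zero, using the Euler relation for the degree-zero functions $N,U,W$: $\Psi(\Theta)=-s\Theta_s-z\Theta_z$ is precisely the homogeneity defect.
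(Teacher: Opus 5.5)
Your plan is correct and is essentially the paper's proof: differentiate $G^0=u^2N$ and $G^i=u^2Wu_i+u^2Ux^i$ three times with the chain rules \eqref{partial0}--\eqref{eq:tetajkli}, recognizing $N_z+\Psi(N_z)=z\Psi(N_z/z)$ and $2N+\Psi(N)=z^2\Psi(N/z^2)$, and treating the $U$-block as the $N$-computation times the constant $x^i$. One small correction: the nested coefficients $\frac{1}{z^2}\Psi(z^2\Psi(z^2\Psi(\cdot)))$ and $\frac{1}{z^3}\Psi(z^2\Psi(z^2\Psi(\cdot)))$ come out directly from \eqref{eq:Thetaukul} and \eqref{eq:tetajkli}, so the triple identities \eqref{eq:psi3I}, \eqref{eq:psi3II} are not needed here and the paper does not invoke them in this proof; what you actually need are $s$-derivative rules such as \eqref{psieq2} and $\partial_s\Psi(\Theta)=\Psi(\Theta_s)-\Theta_s$.
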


\begin{proof}
 By Proposition \ref{prop2} and from \eqref{partial0}, \eqref{thetal}, \eqref{eq:Thetaul}, \eqref{eq:Thetaukul}, we have,
\begin{align*}
    \mathcal{B}^0_{000}=&\frac{\partial^3}{\partial y^0\partial y^0\partial y^0}(u^2N)=\frac{\partial^2}{\partial y^0\partial y^0}(u^2\frac{N_z}{u})=\frac{\partial}{\partial y^0}(N_{zz}) = \frac{N_{zzz}}{u},\\
    \mathcal{B}^0_{00l}=&\frac{\partial^3}{\partial y^0\partial y^0\partial y^l}(u^2N)=\frac{\partial}{\partial y^l}(N_{zz})=\frac{1}{u}\left[N_{szz}x^l + \Psi(N_{zz})u_l\right],
\end{align*}
and using the identity $\Psi(N_{sz}) = z\Psi_s\left(\frac{N_z}{z}\right)$, where the sub index $s$ represents the partial derivative in $s$, we obtain
\begin{align*}
    \mathcal{B}^0_{0kl}&=\frac{\partial^2}{\partial y^k\partial y^l}\left(uN_z\right)=y^0\frac{\partial^2}{\partial y^k\partial y^l}\left(\frac{N_z}{z}\right)
    = \frac{\partial }{\partial y^k}\left({N_{sz}}x^l +{z} \Psi\left(\frac{N_z}{z}\right)u_l\right)\\
    &=\frac{1}{u}\left\{N_{ssz}x^kx^l + \Psi\left(N_{sz}\right)(x^lu_k)_{\overrightarrow{lk}} + z\Psi\left(\frac{N_z}{z}\right)\delta_{kl} + \frac{1}{z}\Psi\left(z^2\Psi\left(\frac{N_z}{z}\right)\right)u_ku_l \right\},\\
    \mathcal{B}^0_{jkl}&=\frac{\partial^3}{\partial y^j\partial y^k\partial y^l}\left(u^2N\right)=\frac{\partial^2}{\partial y^j\partial y^k}\left(uN_sx^l + z^2\Psi\left(\frac{N}{z^2}\right)uu_l\right)\\
    &=\frac{\partial }{\partial y ^j}\left[N_{ss}x^kx^l + z\Psi\left(\frac{N_s}{z}\right)(x^lu_k)_{\overrightarrow{kl}} + z^2\Psi\left(\frac{N}{z^2}\right)\delta_{kl} + \Psi\left(z^2\Psi\left(\frac{N}{z^2}\right)\right)u_ku_l\right].
\end{align*}
From \eqref{eq:G^i},  \eqref{thetal}, \eqref{eq:Thetaul} and \eqref{eq:psiztheta}, $\mathcal{B}^i_{000}$ and $\mathcal{B}^i_{00l}$ are directly obtained. Using the properties of $\Psi $ we have,
\begin{align*}
    \mathcal{B}^i{}_{0kl}&=\frac{\partial^2}{\partial y^k\partial y^l}\left(uU_zx^i +uW_zu_i\right)\\
    &=\frac{\partial }{\partial y^k} \left(U_{sz}x^lx^i+z\Psi\left(\frac{U_z}{z}\right)u_lx^i + W_z\delta_{li} + W_{sz}x^lu_i + \Psi(W_z)u_lu_i\right).
\end{align*}
Analogous to the previous cases, using \eqref{eq:tetajkli}, we have
\begin{align*}
\mathcal{B}^i_{jkl}&=\frac{\partial^2}{\partial y^j\partial y^k}\left(2Uuu_lx^i+uU_sx^lx^i+u\Psi(U)u_lx^i+ 2uWu_lu_i+uW\delta_{li}+ uW_sx^lu_i+ \dfrac{\Psi(zW)}{z}uu_lu_i\right)\\
    &=\frac{\partial}{\partial y^j}\left\{\Psi\left(z^2\Psi\left(\dfrac{U}{z^2}\right) \right)u_ku_lx^i+z^2\Psi\left(\dfrac{U}{z^2}\right)\delta_{kl}x^i+z\Psi\left(\dfrac{U_s}{z}\right)(x^ku_l)_{ \overrightarrow{kl}}x^i\right.\\
    &\quad \left.+\dfrac{1}{z}\Psi\left(z^2\Psi\left(\dfrac{W}{z}\right)\right) u_ku_lu_i+ z\Psi\left(\dfrac{W}{z}\right)(\delta_{kl}u_i)_{ \overrightarrow{kli}}+\Psi(W_s)(x^ku_l)_{ \overrightarrow{kl}}u_i+U_{ss}x^lx^kx^i\right.\\
    &\quad\left. +W_{ss}u_ix^kx^l+W_s(x^k\delta_{li})_{ \overrightarrow{kl}}\dfrac{}{}\right\rbrace.\end{align*}

\end{proof}

\begin{lemma}\label{maintheo1}\normalfont
    Let $F=u \phi(x^0,r,s,z),$ be a Finsler metric defined on $I\times \mathbb{B}^n(\rho)$, $n\geq 3$, where $z=\frac{y^0}{u},$ $r=\vert \overline{x}\vert$, $s=\frac{\langle \overline{x},\overline{y}\rangle}{u}$, $u=\vert \overline{y}\vert$, and $TM$ defined with coordinates \eqref{coordx}, \eqref{coordy}. Then $F$ has vanishing Berwald curvature if, and only if, $\phi$ satisfies
    \begin{align}
        (a)\quad z\Psi\left(\frac{U_s}{z}\right)&=0 &(b)\quad z\Psi\left(\frac{U_z}{z}\right)&=0, &(c)\quad U_{zzz}&=0,\label{eq:pdeU}\\
        (a)\quad z\Psi\left(\frac{N_s}{z}\right)&=0 & (b)\quad z\Psi\left(\frac{N_z}{z}\right)&=0, &(c)\quad N_{zzz}&=0,\label{eq:pdeR}\\
        (a)\quad z\Psi\left(\frac{W}{z}\right)&=0, &(b)\quad W_{zz}&=0,\label{eq:pdeT}
        \end{align}
    where  $N$, $W$ and $U$ are in \eqref{def:N}, \eqref{def:W} and \eqref{def:U} respectively.
\end{lemma}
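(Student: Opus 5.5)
The plan is to read the vanishing of every component listed in Theorem \ref{Bcurv} as a system of identities in $y$, and then split each component according to its tensorial structure. Fix $x$ and $y$ with $u\neq 0$. The vectors $x^i$ and $u_i$ span at most a two-dimensional subspace of $\mathbb{R}^n$. Since $n\geq 3$, we can choose unit vectors $e$ orthogonal to both, and we can also contract with $x$ and $u$ themselves.

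\emph{Sufficiency.} Assume the system \eqref{eq:pdeU}--\eqref{eq:pdeT} holds. Because $\Psi(\Theta)=-s\Theta_s-z\Theta_z$ and $z\Psi(\Theta/z)=\Psi(\Theta)+\Theta$, the conditions $z\Psi(U_s/z)=z\Psi(U_z/z)=0$ say that $U_s$ and $U_z$ are homogeneous of degree one in $(s,z)$. The same holds for $N_s$, $N_z$ and for $W$. Then every coefficient appearing in Theorem \ref{Bcurv} is a derivative or a $\Psi$-expression of a degree-one homogeneous function that drops to degree $\le 0$, and these vanish. For example:
\begin{itemize}
\item $U_{ss}$ is homogeneous of degree $0$, so $\Psi(U_{ss})=0$;
\item $U_{sss}$ is homogeneous of degree $-1$, while the condition $U_{zzz}=0$ together with homogeneity kills the remaining third derivatives;
\item $\Psi(z^2\Psi(U/z^2))$ vanishes because $U$ is quadratic up to a function of $x$.
\end{itemize}
A cleaner way to organize this is to first show that the conditions force $U$, $N$ and $u^2W u_i$ (equivalently $G^A$) to be quadratic in $y$. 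This reduces to $U$ and $N$ being quadratic polynomials in $(s,z)$, and $W$ being linear in $z$ and of degree one. These then give $\mathcal{B}=0$ directly from the definition. I would use \eqref{psieq1}--\eqref{eq:psi3II} to translate the $\Psi$-coefficients into these homogeneity statements.

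\emph{Necessity.} Assume all the $\mathcal{B}^A_{BCD}$ vanish. The components carrying a Kronecker delta give the first-order conditions.
\begin{itemize}
\item In $\mathcal{B}^0_{0kl}$, contracting with $e^ke^l$ kills every term except $z\Psi(N_z/z)\delta_{kl}$. This gives \eqref{eq:pdeR}(b).
\item In $\mathcal{B}^0_{jkl}$, contracting with $e^j e^k x^l$ isolates the terms in $x^l\delta_{jk}$. This gives $z\Psi(N_s/z)=0$, which is \eqref{eq:pdeR}(a).
\item In $\mathcal{B}^i_{0kl}$, contracting with $e^ke^l x^i$ leaves $z\Psi(U_z/z)|x|^2$ plus terms that vanish. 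Here $\Psi(W_z)(\delta_{il}u_k)$ contributes nothing after contracting with $e$, and $W_z\delta$-type terms need care. This gives \eqref{eq:pdeU}(b).
\item In $\mathcal{B}^i_{jkl}$, the contraction $e^i e^j e^k x^l$ isolates the coefficient $z\Psi(W/z)$, giving \eqref{eq:pdeT}(a). The contraction $e^je^k x^l x^i$ isolates $z\Psi(U_s/z)$, giving \eqref{eq:pdeU}(a).
\end{itemize}
The third-order conditions are read off directly. $\mathcal{B}^0_{000}=0$ gives $N_{zzz}=0$. $\mathcal{B}^i_{00l}$ contracted with $e^ie^l$ gives $W_{zz}=0$. $\mathcal{B}^i_{000}$ contracted against a vector orthogonal to $u$ but not to $x$ gives $U_{zzz}=0$; where $x$ is parallel to $y$ (that is, $s^2=r^2$), this extends by continuity.

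The main obstacle is the bookkeeping in $\mathcal{B}^i_{jkl}$ and $\mathcal{B}^i_{0kl}$. There the $\delta$-terms come with cyclic symmetrizations, and a chosen contraction may pick up several coefficients at once. For instance, $e^ie^je^k x^l$ could also capture $W_s$-type terms through $x^l(u_i\delta_{jk})$, but these vanish because $u\cdot e=0$. I would handle this by choosing, for each condition, a contraction with as many $e$'s as possible and verifying term by term that all other structures contain a factor $u\cdot e=0$ or $x\cdot e=0$.

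For sufficiency, the delicate point is confirming that the seven listed conditions imply all the remaining coefficients vanish, such as $\Psi_s(W_s)$ and $\frac{1}{z^3}\Psi(z^2\Psi(z^2\Psi(W/z)))$. I expect the identities \eqref{eq:psi3I}--\eqref{eq:psi3II}, \eqref{psieq2} and \eqref{psieq3} to reduce each of these to derivatives of $z\Psi(W/z)$, $z\Psi(U_s/z)$ and so on, which are zero. If some coefficient does not reduce this way, I would fall back on the quadratic-in-$y$ characterization of $G^A$ described above.
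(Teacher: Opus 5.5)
Your sufficiency half is sound, and it takes a different and cleaner route than the paper. Since $z\Psi(\Theta/z)=\Psi(\Theta)+\Theta$, the conditions are Euler relations. Together with $U_{zzz}=N_{zzz}=W_{zz}=0$ they integrate to $U,N$ quadratic and $W$ linear in $(s,z)$, with coefficients depending on $(x^0,r)$; this is the same integration the paper does in Theorem~\ref{Thm2:B0}. Hence $G^0=u^2N$ and $G^i=uW\,y^i+u^2U\,x^i$ are quadratic in $y$. The paper instead checks every component of Theorem~\ref{Bcurv} through the $\Psi$-identities.

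The gap is in necessity. Your contractions for $N_{zzz}$, $z\Psi(N_z/z)$, $W_{zz}$ and $U_{zzz}$ are correct, but the other four do not isolate the coefficient you claim. The reasons are that $x^lu_l=s\neq 0$, and that the cyclic sums contain more terms than you tracked. In fact
\begin{align*}
u\,\mathcal{B}^0_{jkl}e^je^kx^l&=r^2z\Psi\left(\frac{N_s}{z}\right)+s\,\Psi\left(z^2\Psi\left(\frac{N}{z^2}\right)\right),\\
u\,\mathcal{B}^i_{0kl}x^ie^ke^l&=r^2z\Psi\left(\frac{U_z}{z}\right)+s\,\Psi(W_z).
\end{align*}
The second holds because $(\delta_{il}u_k)_{\overrightarrow{ikl}}$ contains $\delta_{kl}u_i$, contrary to your remark. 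Moreover $\mathcal{B}^i_{jkl}e^ie^je^kx^l\equiv 0$. A term survives only if each $e$-slot sits in a $\delta$ paired with another $e$-slot, which is impossible for three slots. So this contraction says nothing about $z\Psi(W/z)$. Finally,
$u\,\mathcal{B}^i_{jkl}x^ie^je^kx^l=r^2\bigl[r^2z\Psi(U_s/z)+s\Psi(z^2\Psi(U/z^2))\bigr]+r^2z\Psi(W/z)+r^2s\Psi(W_s)+\frac{s^2}{z}\Psi(z^2\Psi(W/z))$.
Thus \eqref{eq:pdeR}(a), \eqref{eq:pdeU}(a), \eqref{eq:pdeU}(b) and \eqref{eq:pdeT}(a) are not established as written.

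Two ingredients are missing.

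\textbf{Order of extraction.} You must obtain \eqref{eq:pdeT}(a) before the $U$-conditions. Contract $\mathcal{B}^i_{jkl}$ with $e$ in all four slots; this gives $3z\Psi(W/z)/u$, which is the paper's $\mathcal{B}^3_{333}$. Then \eqref{psieq3}, \eqref{psieq2} and $z^2\Psi(W/z)=0$ kill $\Psi(W_z)$, $\Psi(W_s)$ and $\Psi(z^2\Psi(W/z))$, removing the $W$-contamination above.

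\textbf{Decoupling $x^l\delta_{jk}$ from $u_l\delta_{jk}$.} You still have to separate these two coefficients, and there are two ways. The first is to contract the remaining slot with $w\in\mathrm{span}\{\overline{x},\overline{y}\}$, $w\perp\overline{y}$, so that $u\cdot w=0\neq x\cdot w$ when $s^2<r^2$. This is exactly what you did for $U_{zzz}$, and you then conclude by continuity. The second is the paper's route with $\mathcal{B}^0_{133}$ and $\mathcal{B}^1_{331}$. Use \eqref{psieq1} with the already established (b) to write $\Psi(z^2\Psi(\Theta/z^2))=-sz\Psi(\Theta_s/z)$ for $\Theta=N,U$. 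This turns the contaminated equations into $(r^2-s^2)\,z\Psi(\Theta_s/z)=0$.

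The paper's own extraction of \eqref{eq:pdeU}(b) from $\mathcal{B}^1_{033}$ also carries a term $\frac{s}{r}\Psi(W_z)$. So the same ordering, with \eqref{eq:pdeT}(a) first, is needed there too.
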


\begin{proof} Suppose $F$ has vanishing Berwald curvature. Consider the orthonormal matrix $O\in O(n)$ (See the proof of Proposition 1.3.1 in \cite{GuoMo2018Book} or the proof of Lemma 1 in \cite{Solorzano2022}) such that 
   \begin{align*}
       \tilde{x}&=O\overline{x}=\left(\vert \overline{x}\vert, 0,\ldots,0\right)\\
       \Tilde{y}&=O\overline{y}=\left(\frac{\langle \overline{x}, \overline{y}\rangle}{\vert \overline{x}\vert}, \frac{\sqrt{\vert \overline{x}\vert^2\vert \overline{y}\vert^2 - \langle \overline{x},\overline{y}\rangle^2}}{\vert \overline{x}\vert},0,\ldots,0\right).
   \end{align*} 
   For the invariance of $r,s$ and $z$ under the action $O$. Then, from $\mathcal{B}^0_{000}=0$, we obtain $N_{zzz}=0$. From $\mathcal{B}^0_{033}=0$, we get \begin{equation}\label{Deq1}
z\Psi\left(\dfrac{N_z}{z}\right)=0.
   \end{equation}
   Using property \eqref{psieq1} and \eqref{Deq1}, we have
\begin{equation}\label{Deq2}
       \Psi\left(z^2\Psi\left(\dfrac{N}{z^2}\right)\right)=-sz\Psi\left(\dfrac{N_s}{z}\right).
   \end{equation}
   From $\mathcal{B}^0_{133}=0$, we obtain
   \begin{equation}\label{Deq3}    
   rz\Psi\left(\dfrac{N_s}{z}\right)+\dfrac{s}{r}\Psi\left(z^2\Psi\left(\dfrac{N}{z^2}\right)\right)=0.
   \end{equation}
   Substituting \eqref{Deq2} into \eqref{Deq3}, we get
   \[\left(\dfrac{r^2-s^2}{r}\right)\left(z\Psi\left(\dfrac{N_s}{z}\right)\right)=0.\]Hence, \[z\Psi\left(\dfrac{N_s}{z}\right)=0.
\]Thus, \eqref{eq:pdeR} is satisfied.

From $\mathcal{B}^1_{000}=0$, we get  
\begin{equation}\label{Deqnw1}
 rU_{zzz}+\dfrac{s}{r}W_{zzz}=0.   
\end{equation}
Also, from $\mathcal{B}^2_{000}=0$ and \eqref{Deqnw1}, we obtain $U_{zzz}=0$. 

From $\mathcal{B}^1_{033}=0$, we have 
\[z\Psi\left(\dfrac{U_z}{z}\right)=0.\]
From $\mathcal{B}^3_{003}=0$, 
 we have $W_{zz}=0$. Also, from $\mathcal{B}^3_{333}=0$, we obtain 
 \begin{equation}\label{Deqnw2}
 z\Psi\left(\dfrac{W}{z}\right)=0\text{ (this implies }\Psi(W_s)=0\text{)}.    
 \end{equation}
 Thus, \eqref{eq:pdeT} is satisfied.
 
From $\mathcal{B}^1_{331}=0$ and \eqref{Deqnw2}, we obtain \[rz\Psi\left(\dfrac{U_s}{z}\right)+\dfrac{s}{r}\Psi\left(z^2\Psi\left(\dfrac{U}{z^2}\right)\right)=0.\]Similarly, as in the case of $N$, we conclude
   \[\left(\dfrac{r^2-s^2}{r}\right)\left(z\Psi\left(\dfrac{U_s}{z}\right)\right)=0.\]Therefore, \[z\Psi\left(\dfrac{U_s}{z}\right)=0,
\]and thus \eqref{eq:pdeU} is satisfied.

Conversely, assume that $\phi$ satisfies \eqref{eq:pdeU}, \eqref{eq:pdeR} and \eqref{eq:pdeT}. From \eqref{eq:pdeR} $(c)$, we get $\mathcal{B}^0_{000}=0$. 

By property \eqref{psieq3} and \eqref{eq:pdeR} $(b)$, we have
\begin{equation}\label{Deq6}
\Psi(N_{zz})=\left(z\Psi\left(\dfrac{N_z}{z}\right)\right)_z=0.
\end{equation}
From \eqref{Deq6} and \eqref{eq:pdeR} $(c)$, we get
\begin{equation}\label{Deq7}
    N_{zzs}=0.
\end{equation}
Therefore, by \eqref{Deq6} and \eqref{Deq7}, we obtain $\mathcal{B}^0_{00l}=0$.   Using property \eqref{psieq2} and \eqref{eq:pdeR} $(a),\;(b)$, we obtain
\begin{equation}\label{Deq4} \Psi(N_{ss})=z\Psi_s\left(\dfrac{N_s}{z}\right)=0,
\end{equation}
and 
\begin{equation}\label{Deq5} \Psi(N_{sz})=z\Psi_s\left(\dfrac{N_z}{z}\right)=0.
\end{equation}
From \eqref{Deq5} and \eqref{Deq7}, we also have
\begin{equation}\label{Deq8}
 N_{zss}=0.   
\end{equation}
Consequently, by \eqref{Deq8}, \eqref{Deq5} and \eqref{eq:pdeR} $(b)$, we have $\mathcal{B}^0_{0kl}=0$. From \eqref{Deq4} and \eqref{Deq8}, we obtain
\begin{equation}\label{Deq9}
    N_{sss}=0.
\end{equation}
Also, by property \eqref{psieq1} and \eqref{eq:pdeR} $(a),\;(b)$, we have
\begin{equation}\label{Deq10}
\Psi\left(z^2\Psi\left(\dfrac{N}{z^2}\right)\right)=-sz\Psi\left(\dfrac{N_s}{z}\right)-z^2\Psi\left(\dfrac{N_z}{z}\right)=0. 
\end{equation}
Thus, by \eqref{Deq9}, \eqref{Deq4}, \eqref{eq:pdeR} $(a)$ and \eqref{Deq10}, we conclude that $\mathcal{B}^0_{jkl}=0$. From \eqref{eq:pdeU} $(c)$ and \eqref{eq:pdeT} $(b)$, we have $\mathcal{B}^i_{000}=0$. Now, by \eqref{eq:pdeU}, analogously as in the case of $N$ we obtain
\begin{align}
\Psi(U_{ss})=\Psi(U_{sz})=\Psi(U_{zz})=0\label{Deq11},\\
U_{zss}=U_{zzs}=U_{sss}=0,\quad \Psi\left(z^2\Psi\left(\dfrac{U}{z^2}\right)\right)=0.\label{Deq12}
\end{align}
On the other hand, by property \eqref{psieq4} and \eqref{eq:pdeT} $(b)$, we have
\begin{equation}\label{Deq13}
\Psi_z(W_z)=\Psi(W_{zz})-W_{zz}=0.    
\end{equation}
Therefore, by \eqref{Deq11}, \eqref{Deq12}, \eqref{eq:pdeT} $(b)$ and \eqref{Deq13}, we obtain $\mathcal{B}^i_{00l}=0$. By property \eqref{psieq3} and \eqref{eq:pdeT} $(a)$, we get
\begin{equation}\label{Deq14}
\Psi(W_z)=\left(z\Psi\left(\dfrac{W}{z}\right)\right)_z=0.
\end{equation}From \eqref{Deq14} and \eqref{eq:pdeT} $(b)$, we obtain 
\begin{equation}\label{Deq15}
    W_{sz}=0.
\end{equation}
Consequently, by \eqref{Deq11}, \eqref{Deq12}, \eqref{eq:pdeU} $(b)$, \eqref{Deq14} and \eqref{Deq15}, we get $\mathcal{B}^i_{0kl}=0$. By property \eqref{psieq2} and \eqref{eq:pdeT} $(a)$, we obtain
\begin{equation}\label{Deq16} \Psi(W_s)=z\Psi_s\left(\dfrac{W}{z}\right)=0.
\end{equation} From \eqref{Deq16} and \eqref{Deq15}, we have
\begin{equation}\label{Deq17}
    W_{ss}=0.
\end{equation}
Therefore, by \eqref{Deq11}, \eqref{Deq12}, \eqref{eq:pdeU}, \eqref{eq:pdeT} $(a)$, \eqref{Deq16} and \eqref{Deq17}, we obtain $\mathcal{B}^i_{jkl}=0$.
\end{proof}

\begin{theorem}\label{Thm2:B0}\normalfont
    Let $F= u \phi(x^0,r,s,z),$ be a Finsler metric defined on $I\times \mathbb{B}^n(\rho)$, $n\geq 3$, where $z=\frac{y^0}{u},$ $r=\vert \overline{x}\vert$, $s=\frac{\langle \overline{x},\overline{y}\rangle}{u}$, $u=\vert \overline{y}\vert$ and $TM$ defined with coordinates \eqref{coordx}, \eqref{coordy}. Then $F$ has vanishing Berwald curvature if, and only if, the positive function $\phi$ satisfies
    \begin{align}
z\phi_{x^0s} - \frac{1}{r}\phi_r + \frac{s}{r}\phi_{rs}+\phi_{ss}&=2\left[\left(\phi-s\phi_s- z\phi_z+(r^2-s^2)\phi_{ss}\right)U +\phi_{sz}L\right],\label{eq:p1AU}\\
   z\phi_{x^0z} - \phi_{x^0} + \frac{s}{r}\phi_{rz} + \phi_{sz}&=2\left[(r^2-s^2)\phi_{sz}U+ \phi_{zz}L\right],\label{eq:p2AU}\\
   z\phi_{x^0} +\frac{s}{r}\phi_r + \phi_s &= 2\left[W\phi + (s\phi +(r^2-s^2)\phi_s)U + \phi_z L\right]\label{eq:third}
\end{align}
where
    \begin{align*}
U&=f_1\frac{s^2}{2}+f_2 sz + f_3\frac{z^2}{2} + f_4, \\
L&= f_5\frac{s^2}{2} +f_6sz +f_7\frac{z^2}{2}+f_8 -sz\left(f_1\frac{s^2}{2}+f_2 sz + f_3\frac{z^2}{2}\right),\\
W&=f_9 s + f_{10}z
\end{align*}
  and $f_i=f_i(x^0,r), \, i=1,...,10,$ are arbitrary differentiable functions. 
\end{theorem}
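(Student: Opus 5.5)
The plan is to start from Lemma \ref{maintheo1}, which says Berwald curvature vanishes exactly when $U$, $N$, $W$ satisfy the PDE systems \eqref{eq:pdeU}, \eqref{eq:pdeR}, \eqref{eq:pdeT}. I will solve these systems explicitly in the variables $(s,z)$, with $(x^0,r)$ treated as parameters. Then I will translate the defining formulas of $U,L,W$ in Proposition \ref{prop2} into the three equations \eqref{eq:p1AU}--\eqref{eq:third}.

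\emph{Step 1: solve for $W$.} By Euler's theorem, $\Psi(\Theta)=-s\Theta_s-z\Theta_z$ vanishes iff $\Theta$ is positively homogeneous of degree $0$ in $(s,z)$. Thus $z\Psi(W/z)=0$ means $W$ is homogeneous of degree $1$ in $(s,z)$. Combined with $W_{zz}=0$ and smoothness in $(s,z)$ (inherited from $\phi$), this forces $W$ to be linear, so $W=f_9s+f_{10}z$. I will make the smoothness step explicit: a degree-one homogeneous function of $(s,z)$ that is affine in $z$ has the form $a(s)z+b(s)$ with $a$ of degree $0$ and $b$ of degree $1$. Smoothness through $s=0$ then gives $a$ constant and $b$ linear.

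\emph{Step 2: solve for $U$.} Conditions \eqref{eq:pdeU}(a),(b) say $U_s,U_z$ are homogeneous of degree $1$. Hence $U-U(0,0)$ is homogeneous of degree $2$, via Euler applied to $sU_s+zU_z$, or equivalently \eqref{Deq12}. Smoothness of $U$ near the origin in $(s,z)$ then makes $U$ a quadratic polynomial in $(s,z)$ plus $f_4$. Together with $U_{zzz}=0$ and $U_{sss}=0$ from \eqref{Deq12}, this gives $U=f_1\frac{s^2}{2}+f_2sz+f_3\frac{z^2}{2}+f_4$. The same argument applied to $N$ via \eqref{eq:pdeR} gives $N$ as a quadratic polynomial plus a function of $(x^0,r)$.

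\emph{Step 3: recover $L$ and the three equations.} From \eqref{def:N}, $L=N-z(W+sU)$. Substituting the forms of $N,W,U$ yields the stated expression for $L$. The linear part of $L$ is absorbed because $N$'s linear coefficients combine with $f_9,f_{10}$, so the $f_i$ are relabeled. The cubic term $-sz(f_1s^2/2+f_2sz+f_3z^2/2)$ comes from $-zsU$. I expect bookkeeping of which cubic and linear terms survive to be the delicate point. In particular, one must check that the $z^2$ term in $zW$ and the $sz$ term from $zf_4 s$ are consistent with the claimed $f_5,\dots,f_8$ parametrization; if not, the relabeling must be adjusted.

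Next I invert the defining relations. Definitions \eqref{def:L} and \eqref{def:U} are the Cramer-rule solution of the $2\times2$ linear system
$(\Omega+(r^2-s^2)\phi_{ss})\,2U+\phi_{sz}\,2L=\varphi_s-\frac{2}{r}\phi_r$ and
$(r^2-s^2)\phi_{sz}\,2U+\phi_{zz}\,2L=\varphi_z-2\phi_{x^0}$,
whose determinant is $\Lambda>0$. Expanding $\varphi_s$ and $\varphi_z$ gives exactly \eqref{eq:p1AU} and \eqref{eq:p2AU}. Equation \eqref{def:W}, multiplied by $2\phi$, is \eqref{eq:third}.

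\emph{Converse.} If $\phi$ satisfies \eqref{eq:p1AU}--\eqref{eq:third} with $U,L,W$ of the stated form, then nondegeneracy ($\Lambda>0$) shows these are the actual spray quantities. A direct check shows they satisfy \eqref{eq:pdeU}--\eqref{eq:pdeT}, since polynomials of the right homogeneity are annihilated by the relevant $\Psi$ expressions. Lemma \ref{maintheo1} then gives vanishing Berwald curvature.

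The main obstacle is Step 2's regularity claim, since $z$ ranges over all of $\mathbb{R}$ but the homogeneity arguments act on rays. I will handle it by restricting to rays and using smoothness at $s=z=0$, which is allowed because $\phi$ is smooth for all $(s,z)$ when $\overline{y}\neq0$.
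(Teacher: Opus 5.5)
Your proposal is correct and follows the paper's proof. You use Lemma \ref{maintheo1} to get polynomial forms for $U$, $N$, $W$, set $L=N-z(W+sU)$, and invert the $2\times 2$ system with determinant $\Lambda>0$ to obtain \eqref{eq:p1AU}--\eqref{eq:p2AU}. You read \eqref{eq:third} off \eqref{def:W}, and for the converse you recompute $N=z(W+sU)+L$ as a quadratic, where the cubic terms cancel, and apply the lemma.

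Your Euler/smoothness-at-$(s,z)=(0,0)$ argument supplies the step the paper only asserts. The bookkeeping you were unsure about does close: writing $N=g_5\frac{s^2}{2}+g_6sz+g_7\frac{z^2}{2}+g_8$, one gets $f_5=g_5$, $f_6=g_6-f_4-f_9$, $f_7=g_7-2f_{10}$ and $f_8=g_8$.
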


\begin{proof}
 From \eqref{eq:pdeU}, \eqref{eq:pdeR} and \eqref{eq:pdeT} we have that there are differentiable functions $g_i=g_i(x^0,r)$ such that
    \begin{align*}
        U&=g_1\frac{s^2}{2}+g_2 sz + g_3\frac{z^2}{2} + g_4,\\
        N&=g_5\frac{s^2}{2}+g_6 sz + g_7\frac{z^2}{2} + g_8,\\
        W&=g_9s + g_{10}z.
    \end{align*}
From \eqref{def:N}, \[L=N-z(W+sU).\]

Equation \eqref{eq:third} become from the definition of $W$ in \eqref{def:W}. From definition of $U$ and $L$ in \eqref{def:U} and \eqref{def:L}, we have
\begin{align}
    \phi_{zz}p_1-\phi_{sz}p_2&=2\Lambda U, \label{eq:UA01}\\
    -(r^2-s^2)\phi_{sz}p_1+(\Omega +(r^2-s^2)\phi_{ss})p_2&=2\Lambda L, \label{eq:UA02}
\end{align}
where
\begin{align}
   p_1&:=\varphi_s-\frac{2}{r}\phi_r=z\phi_{x^0s} - \frac{1}{r}\phi_r + \frac{s}{r}\phi_{rs}+\phi_{ss},\label{def:p1}\\
   p_2&:=\varphi_z-2\phi_{x^0}=z\phi_{x^0z} - \phi_{x^0} + \frac{s}{r}\phi_{rz} + \phi_{sz},\label{def:p2}
\end{align}
and $\Omega, \Lambda$ are given in \eqref{DefOmega} and \eqref{Def:Lambda} respectively. Due to the fact $\Lambda\neq 0, $ the system \eqref{eq:UA01}- \eqref{eq:UA02} is equivalent to the system \eqref{eq:p1AU}-\eqref{eq:p2AU}. 

On the other hand, suppose that 
\begin{align*}
        U&=f_1\frac{s^2}{2}+f_2 sz + f_3\frac{z^2}{2} + f_4,\\
        L&= f_5\frac{s^2}{2} +f_6sz +f_7\frac{z^2}{2}+f_8 -sz\left(f_1\frac{s^2}{2}+f_2 sz + f_3\frac{z^2}{2}\right),\\
        W&= f_9 s + f_{10}z
    \end{align*}
then, from definition of $N$  in \eqref{def:N}  we have
\begin{align*}
N&=z(W+sU) + L\\
&= \frac{f_5}{2}s^2
+ \left(f_9+f_4+f_6\right)s z
+ \left(f_{10}+\frac{f_7}{2}\right)z^2
+ f_8 .
\end{align*}
Thus, from Lemma \ref{maintheo1}, we conclude the proof.
\end{proof}

\begin{theorem}\label{Lcurv}\normalfont
    With the same notation of Theorem \ref{maintheo1}. The Landsberg curvature of $F=u\phi$ is given by $L_{ABC}=\frac{1}{2}\phi \overline{L}_{ABC}$, where
    \begin{align*}
        \overline{L}_{000}&=\phi_zN_{zzz} + \left(r^2\phi_s + s\Omega\right)U_{zzz} + \left(s\phi_s + \Omega\right)W_{zzz},
    \\
        \overline{L}_{00l}&=\left(\phi_zN_{szz} + (r^2\phi_s+s\Omega)U_{szz}  + (s\phi_s+\Omega)W_{szz}+ \phi_s W_{zz}\right)x^l \\
        &\qquad +\left(\phi_z\Psi\left(N_{zz}\right) + (r^2\phi_s +s\Omega)\Psi(U_{zz}) + (s\phi_s+\Omega)\Psi(W_{zz}) -s\phi_s W_{zz}  \right)u_l,
    \\
        \overline{L}_{0kl}&= \left[\phi_zN_{ssz} + (r^2\phi_s +s\Omega)U_{ssz} + (s\phi_s+\Omega)W_{ssz}+ 2\phi_s W_{sz} \right]x^kx^l \\
        &\qquad+ \left[z\phi_z\Psi\left(\frac{N_z}{z}\right)+z(r^2\phi_s+s\Omega)\Psi\left(\frac{U_z}{z}\right) + (s\phi_s+\Omega)\Psi(W_z) \right]\delta_{kl}+\nonumber\\
        &\quad +\left\{\frac{1}{z}\phi_z \Psi\left(z^2\Psi\left(\frac{N_z}{z}\right)\right) +\frac{r^2\phi_s + s\Omega}{z}\Psi\left(z^2\Psi\left(\frac{U_z}{z}\right)\right) + \frac{s\phi_s + \Omega}{z^2}\Psi\left(z^2\Psi\left(W_z\right)\right)+ 2\Omega\Psi(W_z)\right\}u_ku_l\\
        &\quad +\left\{\phi_z\Psi\left(N_{sz}\right)+(r^2\phi_s + s\Omega)\Psi\left(U_{sz}\right) + {(s\phi_s + \Omega)}\Psi\left(W_{sz}\right)+\phi_s \Psi(W_{z}) -s\phi_s W_{sz}\right\}\left(x^ku_l\right)_{\overrightarrow{kl}},
    \\
        \overline{L}_{jkl}&=\left\{ \frac{1}{3}\phi_zN_{sss}+ \frac{(r^2\phi_s+s\Omega)}{3}U_{sss}+ \frac{s\phi_s+\Omega}{3}W_{sss} +      \phi_s W_{ss}\right\}(x^jx^kx^l)_{\overrightarrow{jkl}}\\
        &\quad + \left\{\phi_z\Psi(N_{ss})+(r^2\phi_s +s\Omega)\Psi\left(U_{ss}\right) + (s\phi_s + \Omega)\Psi(W_{ss}) +2\phi_s\Psi(W_s)  -s\phi_sW_{ss}\right\}(u_jx^kx^l)_{\overrightarrow{jkl}}\\
        &\quad + \left\{\frac{\phi_z}{3z^2}\Psi\left(z^2\Psi\left(z^2\Psi\left(\frac{N}{z^2}\right)\right)\right)+\frac{r^2\phi_s +s\Omega }{3z^2}\Psi\left(z^2\Psi\left(z^2\Psi\left(\frac{U}{z^2}\right)\right)\right) \right.
        \\
        &\qquad\qquad\qquad\qquad  \left.+ \frac{s\phi_s + \Omega}{3z^3}\Psi\left(z^2\Psi\left(z^2\Psi\left(\frac{W}{z}\right)\right)\right) + \frac{\Omega}{z}\Psi\left(z^2\Psi\left(\frac{W}{z}\right)\right)\right\}(u_ju_ku_l)_{\overrightarrow{jkl}}
        \\
        &\quad +\left\{z\phi_z\Psi\left(\frac{N_s}{z}\right)+z(r^2\phi_s + s\Omega)\Psi\left(\frac{U_s}{z}\right) + (s\phi_s + \Omega)\Psi\left(W_s\right) + z\phi_s\Psi\left(\frac{W}{z}\right)\right\}(\delta_{jk}x^l)_{\overrightarrow{jkl}}\\
        &\quad +\left\{\frac{\phi_z}{z}\Psi\left(z^2\Psi\left(\frac{N_s}{z}\right)\right)+\frac{(r^2\phi_s +s\Omega)}{z}\Psi\left(z^2\Psi\left(\frac{U_s}{z}\right)\right)+\frac{s\phi_s + \Omega}{z^2}\Psi\left(z^2\Psi\left(W_s\right)\right)\right.\\
        &\qquad\qquad\qquad  \left.+\frac{\phi_s}{z}\Psi\left(z^2\Psi\left(\frac{W}{z}\right)\right) + 2\Omega\Psi(W_s)\right\}(u_ju_kx^l)_{\overrightarrow{jkl}}\\
        &\quad +\left\{\phi_z\Psi\left(z^2\Psi\left(\frac{N}{z^2}\right)\right)+(r^2\phi_s + s\Omega)\Psi\left(z^2\Psi\left(\frac{U}{z^2}\right)\right)+ \frac{s\phi_s + \Omega}{z}\Psi\left(z^2\Psi\left(\frac{W}{z}\right)\right)\right.\\
        &\qquad\qquad\qquad  \left. +z\Omega\Psi\left(\frac{W}{z}\right)\right\}(\delta_{jk}u_l)_{\overrightarrow{jkl}}.
    \end{align*}
\end{theorem}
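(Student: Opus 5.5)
We use the formula recalled just before Theorem \ref{Bcurv}, namely $L_{ABC}=\frac12 F\,F_{y^D}\mathcal{B}^D_{ABC}$, and substitute the components of $\mathcal{B}^D_{ABC}$ from Theorem \ref{Bcurv}. Since $F=u\phi$, it suffices to compute $u\,F_{y^D}\mathcal{B}^D_{ABC}$ and collect terms in the tensor basis $x^j$, $u_j$, $\delta_{jk}$ that already appears there.

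The first step is to compute $F_{y^D}$. By \eqref{partial0} and \eqref{thetal},
\[F_{y^0}=\phi_z,\qquad F_{y^i}=\phi u_i+u\,\partial_{y^i}\phi=\phi_s x^i+(\phi+\Psi(\phi))u_i=\phi_s x^i+\Omega u_i .\]
Every contraction with $\mathcal{B}^i_{ABC}$ therefore reduces to the scalar products
\[x^iu_i=s,\qquad x^ix^i=r^2,\qquad u_iu_i=1,\qquad \delta_{il}x^i=x^l,\qquad \delta_{il}u_i=u_l .\]
Whenever $\mathcal{B}^i_{ABC}$ has a term proportional to $x^i$ (the $U$-part), it contracts to a factor $\phi_s r^2+\Omega s$. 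A term proportional to $u_i$ (the $W$-part) contracts to $\phi_s s+\Omega$. This is exactly why the coefficients $r^2\phi_s+s\Omega$ and $s\phi_s+\Omega$ multiply the $U$- and $W$-derivatives throughout the statement, while $\phi_z$ multiplies the $N$-terms coming from $\mathcal{B}^0_{ABC}$.

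The second step is to handle the $\delta$-terms in $\mathcal{B}^i$. These involve $W_{zz}$, $\Psi(W_z)$, $W_{sz}$, $W_s$, $\Psi(W_s)$, $z\Psi(W/z)$ and $\frac1z\Psi(z^2\Psi(W/z))$. When a term carries $\delta_{il}$, the contraction with $\phi_sx^i+\Omega u_i$ produces $\phi_s x^l+\Omega u_l$ rather than a scalar. These contributions account for the extra summands in the statement, such as $\phi_sW_{zz}$, $2\phi_sW_{sz}$, $2\Omega\Psi(W_z)$, $z\phi_s\Psi(W/z)$ and $\frac{\phi_s}{z}\Psi(z^2\Psi(W/z))$.

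We then go through the four cases $\overline{L}_{000},\overline{L}_{00l},\overline{L}_{0kl},\overline{L}_{jkl}$ in turn. For each one we
\begin{enumerate}
\item write $\phi_z\mathcal{B}^0+\phi_sx^i\mathcal{B}^i+\Omega u_i\mathcal{B}^i$;
\item contract the indices;
\item group the result by the basis tensors $x^kx^l$, $(x^ku_l)_{\overrightarrow{kl}}$, $\delta_{kl}$, $u_ku_l$, and their cubic analogues.
\end{enumerate}
The cases $\overline{L}_{000}$ and $\overline{L}_{00l}$ are immediate. In $\overline{L}_{00l}$, for instance, the term $\Psi_z(W_z)u_lu_i$ contracts to $(s\phi_s+\Omega)\Psi_z(W_z)u_l$. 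Using \eqref{psieq4}, we rewrite $\Psi_z(W_z)=\Psi(W_{zz})-W_{zz}$. The $-W_{zz}$ part combines with $W_{zz}\delta_{il}$, which contracts to $W_{zz}(\phi_sx^l+\Omega u_l)$, and the total coefficient of $u_l$ becomes $(s\phi_s+\Omega)\Psi(W_{zz})-s\phi_sW_{zz}$, as stated. Analogous rewritings use \eqref{eq:psiztheta} and \eqref{psieq2} to convert $\Psi_s$ terms, for example $\Psi_s(W_s)$ and $\Psi_s(z^2\Psi(U/z^2))$, into the forms appearing in the statement, such as $\Psi(W_{ss})-W_{ss}$-type combinations and $\frac1z\Psi(z^2\Psi(U_s/z))$.

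The main obstacle is the bookkeeping in $\overline{L}_{jkl}$. There, the cyclic symmetrizations $(\delta_{ji}\delta_{kl})_{\overrightarrow{ikl}}$, $(\delta_{ji}u_ku_l+\delta_{ik}u_lu_j)_{\overrightarrow{ikl}}$ and $x^j(u_i\delta_{kl})_{\overrightarrow{ikl}}$ each produce several contributions after contraction with $\phi_sx^i+\Omega u_i$. These contributions must be redistributed among the six basis tensors with the correct multiplicities, including the $\frac13$ factors on the fully symmetric terms. We would proceed in two passes. First, contract each term of $\mathcal{B}^i_{jkl}$ separately and record its contributions in a table indexed by basis tensor. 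Second, sum the table and simplify using \eqref{psieq2}, \eqref{psieq3} and \eqref{eq:psiztheta}. As a consistency check, we would verify the result against the special configuration $\tilde x=(r,0,\dots)$, $\tilde y$ in the plane $\mathrm{span}\{e_1,e_2\}$ used in Lemma \ref{maintheo1}.
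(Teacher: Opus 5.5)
Your proposal is correct and is essentially the paper's proof. The paper likewise substitutes $F_{y^0}=\phi_z$, $F_{y^i}=\phi_s x^i+\Omega u_i$ and the components from Theorem \ref{Bcurv} into $L_{ABC}=\frac12 F F_{y^D}\mathcal{B}^D_{ABC}$; it only differs in first tabulating $ux^i\mathcal{B}^i_{ABC}$ and $uu_i\mathcal{B}^i_{ABC}$ separately and then weighting them by $\phi_s$ and $\Omega$, rather than contracting with $\phi_sx^i+\Omega u_i$ in one go. One small correction to your citations: the $\Psi_s$ rewritings you need rest on $\Psi_s(\Theta)=\Psi(\Theta_s)-\Theta_s$ (the $s$-analogue of \eqref{psieq4}), together with \eqref{psieq2} and $\frac1z\Psi(z\Theta)=\Psi(\Theta)-\Theta$, not on \eqref{eq:psiztheta}, though all of these follow by the same one-line computation.
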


\begin{proof}
    From Theorem \ref{Bcurv} we have,

    \begin{align*}
    ux^i\mathcal{B}^i_{000}&=r^2U_{zzz}+sW_{zzz}
    \\
    uu^i\mathcal{B}^i_{000}&=sU_{zzz} + W_{zzz}
    \\
        ux^i\mathcal{B}^i_{00l}&=\left[r^2U_{szz} + W_{zz} + sW_{szz}\right]x^l + \left[r^2\Psi(U_{zz}) + s\Psi_z(W_{z})\right]u_l
    \\
        uu^i\mathcal{B}^i_{00l}&=\left[sU_{szz}+W_{szz}\right]x^l + \left[s\Psi(U_{zz}) + W_{zz} + \Psi_z(W_z)\right]u_l
    \\
        ux^i\mathcal{B}^i_{0kl}&=\left\{r^2U_{ssz} + sW_{ssz}+ 2W_{sz}\right\}x^kx^l + \left\{r^2z\Psi\left(\frac{U_z}{z}\right) + s\Psi(W_z) \right\}\delta_{kl}+\nonumber\\
        &\quad +\left\{\frac{r^2}{z}\Psi\left(z^2\Psi\left(\frac{U_z}{z}\right)\right) + \frac{s}{z^2}\Psi\left(z^2\Psi\left(W_z\right)\right)\right\}u_ku_l\\
        &\quad +\left\{r^2\Psi\left(U_{sz}\right) + \frac{s}{z}\Psi\left(zW_{sz}\right) + \Psi(W_z)\right\}\left(x^ku_l\right)_{\overrightarrow{kl}}
    \\
        uu^i\mathcal{B}^i_{0kl}&=\left\{sU_{ssz} + W_{ssz}\right\}x^kx^l\\
        &\quad +\left\{sz\Psi\left(\frac{U_z}{z}\right)+ \Psi\left(W_z\right)\right\}\delta_{kl}\\
        &\quad +\left\{\frac{s}{z}\Psi\left(z^2\Psi\left(\frac{U_z}{z}\right)\right) + \frac{1}{z^2}\Psi\left(z^2\Psi\left(W_z\right)\right)+ 2\Psi\left(W_z\right)\right\}u_ku_l\\
        &\quad +\left\{s\Psi(U_{sz}) +W_{sz} + \frac{1}{z}\Psi\left(zW_{sz}\right)\right\}(x^ku_l)_{\overrightarrow{kl}}
    \\
        ux^i\mathcal{B}^i_{jkl}&=\left\{r^2U_{sss} + W_{ss} + \frac{s}{3}W_{sss}\right\}(x^jx^kx^l)_{\overrightarrow{jkl}}\\
        &\quad + \left\{\frac{r^2}{z^2}\Psi\left(z^2\Psi\left(z^2\Psi\left(\frac{U}{z^2}\right)\right)\right) + \frac{s}{3z^3}\Psi\left(z^2\Psi\left(z^2\Psi\left(\frac{W}{z}\right)\right)\right)\right\}(u_ju_ku_l)_{\overrightarrow{jkl}}\\
        &\quad + \left\{r^2\Psi\left(U_{ss}\right) + s\Psi_s\left(W_s\right)+2\Psi\left(W_s\right)\right\}(u_jx^kx^l)_{\overrightarrow{jkl}}\\
        &\quad +\left\{r^2z\Psi\left(\frac{U_s}{z}\right) + z\Psi\left(\frac{W}{z}\right) + s\Psi\left(W_s\right)\right\}(\delta_{jk}x^l)_{\overrightarrow{jkl}}\\
        &\quad +\left\{r^2\Psi_s\left(z^2\Psi\left(\frac{U}{z^2}\right)\right) + \frac{s}{z^2}\Psi\left(z^2\Psi\left(W_s\right)\right)+ \frac{1}{z}\Psi\left(z^2\Psi\left(\frac{W}{z}\right)\right)\right\}(u_ju_kx^l)_{\overrightarrow{jkl}}\\
        &\quad +\left\{r^2\Psi\left(z^2\Psi\left(\frac{U}{z^2}\right)\right)+ \frac{s}{z}\Psi\left(z^2\Psi\left(\frac{W}{z}\right)\right)\right\}(\delta_{jk}u_l)_{\overrightarrow{jkl}}
    \end{align*}
    \begin{align*}
        uu^i\mathcal{B}^i_{jkl}&=\left\{\frac{s}{z^2}\Psi\left(z^2\Psi\left(z^2\Psi\left(\frac{U}{z^2}\right)\right)\right)+ \frac{1}{z}\Psi\left(z^2\Psi\left(\frac{W}{z}\right)\right)+ \frac{1}{3z^3}\Psi\left(z^2\Psi\left(z^2\Psi\left(\frac{W}{z}\right)\right)\right)\right\}(u_ju_ku_l)_{\overrightarrow{jkl}}\\
        &\quad +\left\{s\Psi\left(U_{ss}\right) + W_{ss}+\Psi_s\left(W_s\right)\right\}(u_jx^kx^l)_{\overrightarrow{jkl}} + \left\{sU_{sss}+ \frac{1}{3}W_{sss}\right\}(x^jx^kx^l)_{\overrightarrow{jkl}}\\
        &\quad + \left\{sz\Psi\left(\frac{U_s}{z}\right)+ \Psi\left(W_s\right)\right\}(\delta_{jk}x^l)_{\overrightarrow{jkl}}\\
        &\quad +\left\{s\Psi_s\left(z^2\Psi\left(\frac{U}{z^2}\right)\right)+\frac{1}{z^2}\Psi\left(z^2\Psi\left(W_s\right)\right) + 2\Psi\left(W_s\right)\right\}(u_ju_kx^l)_{\overrightarrow{jkl}}\\
        &+\left\{s\Psi\left(z^2\Psi\left(\frac{U}{z^2}\right)\right)+z\Psi\left(\frac{W}{s}\right)+\frac{1}{z}\Psi\left(z^2\Psi\left(\frac{W}{z}\right)\right)\right\}(\delta_{jk}u_l)_{\overrightarrow{jkl}}
    \end{align*}
     Using $F_{y^0}=\phi_z$, $F_{y^i}=\phi_sx^i + z\Psi\left(\frac{\phi}{z}\right)u_i=\phi_sx^i + \Omega u_i,$ and \[L_{BCD}=\frac{1}{2}F\left(F_{y^0}\mathcal{B}^0_{BCD} + F_{y^i}\mathcal{B}^i_{BCD}\right),\]
     we have the result.
\end{proof}

\section{On a generalized Finsler warped product metrics whit vanishing Lansberg curvature}

In this section, we assume that $\phi$ does not depend on $s$. This assumption is interesting not only because it simplifies the computations, but also because it allows us to generalize the family of Finsler warped metrics of the form $F = u\phi(x^0, z)$. 

It is knowed that every Landsberg Randers metrics $F=\alpha +\beta$ are Berwald metrics \cite{Matsumoto1974}. In the next theorem, which is a consequence of Theorem \ref{Thm2:B0}, we classify non-Randers Berwald metrics.

\begin{theorem}\label{cor2}\normalfont
      Let $F= u \phi(x^0,r,z),$ be a Finsler metric defined on $I\times \mathbb{B}^n(\rho)$, $n\geq 3$, where $z=\frac{y^0}{u},$ $r=\vert \overline{x}\vert$, $u=\vert \overline{y}\vert$ and $TM$ defined with coordinates \eqref{coordx}, \eqref{coordy}. Then $F$ has vanishing Berwald curvature if, and only if, one of the next two possibilities is satisfied
    \begin{enumerate}
        \item $F=u\phi$ is Berwald Randers metric, or
        \item $\phi$ satisfy
        \begin{align}
    -\frac{\phi_r}{r} &= 2(\phi-z\phi_z)g_1\label{eq:Berwald01}\\
    \phi_{x^0}&=z\phi_z g_2. \label{eq:Berwald02}
\end{align}
where $g_1=g_1(x^0,r), g_2=g_2(x^0,r)$ are arbitrary differentiable functions.
    \end{enumerate}
\end{theorem}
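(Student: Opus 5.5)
We would derive the statement directly from Theorem \ref{Thm2:B0}, specialised to $\phi_s=0$ and using the explicit quantities $U$, $L$, $W$ of Corollary \ref{cor:phi_no_s}.

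\emph{Step 1: reduce the Berwald condition to ODEs in $z$.} When $\phi$ does not depend on $s$, Corollary \ref{cor:phi_no_s} gives
\[
U=-\frac{\phi_r}{2r(\phi-z\phi_z)}, \qquad L=a(z)+s\,b(z),
\]
where
\[
a=\frac{z\phi_{x^0z}-\phi_{x^0}}{2\phi_{zz}}, \qquad b=\frac{\phi_{rz}}{2r\phi_{zz}}.
\]
Both $a$ and $b$ depend only on $(x^0,r,z)$. By Theorem \ref{Thm2:B0}, $F$ is Berwald if and only if $U$, $L$, $W$ have the polynomial forms listed there.

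\emph{Step 2: match coefficients.} Since $U$ is independent of $s$, we get $f_1=f_2=0$, so $U=f_3z^2/2+f_4$. Comparing powers of $s$ in $L$ then gives $f_5=0$, together with
\[
a=\frac{f_7}{2}z^2+f_8, \qquad b=f_6z-\frac{f_3}{2}z^3.
\]
Finally, we insert these into the expression for $W$ in Corollary \ref{cor:phi_no_s}. Imposing $W=f_9s+f_{10}z$ and separating the $s^0$ and $s^1$ parts yields two identities:
\[
\frac{z}{2}\phi_{x^0}-\phi_z a=f_{10}z\phi, \qquad -\frac{\phi_r}{2r}-\phi U-\phi_z b=f_9\phi .
\]
Each of these, together with the formulas for $a$, $b$, $U$, is an ODE system in $z$ with coefficients depending on $(x^0,r)$.

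\emph{Step 3: case split.} Writing $\phi_r$ through $U$ and substituting into the $s^1$ identity gives
\[
(\phi-z\phi_z)U-\phi U-\phi_z b=f_9\phi,
\]
that is,
\[
-\phi_z\left(zU+b\right)=f_9\phi .
\]
Here $zU+b=(f_4+f_6)z$, because the $z^3$ terms cancel. Likewise the $s^0$ identity combined with $a$ yields a relation between $\phi_{x^0}$ and $\phi_z$. We then distinguish two cases.

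\textbf{Case (i):} the "non-trivial" coefficients $f_3$, $f_7$, $f_8$ vanish, or more precisely the combinations that prevent $U$ from being $z$-independent and $a$ from vanishing. Then $U=g_1(x^0,r)$ gives \eqref{eq:Berwald01} directly. Also $a=0$ reads $z\phi_{x^0z}=\phi_{x^0}$, and together with the $s^0$ identity this gives $\phi_{x^0}=z\phi_zg_2$, i.e.\ \eqref{eq:Berwald02}. Conversely, \eqref{eq:Berwald01}--\eqref{eq:Berwald02} make $U$ constant in $z$. They also make $a$ and $b$ of the required form, since $\phi_{x^0z}=g_2(\phi_z+z\phi_{zz})$ and $\phi_{rz}=2rg_1z\phi_{zz}$. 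So Theorem \ref{Thm2:B0} applies.

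\textbf{Case (ii):} otherwise. Eliminating $\phi_r$ and $\phi_{x^0}$ between the identities produces, for fixed $(x^0,r)$, a second-order linear-type ODE in $z$. Its shape is
\[
\phi_{zz}\,(\text{quadratic in } z)=\text{expression in }\phi,\ \phi_z .
\]
We expect every solution of this ODE to be of the form $\sqrt{Az^2+Bz+C}+Dz+E$ in the variable $z$. Equivalently, $F=\sqrt{\alpha^2}+\beta$ is a Randers metric, which is then Berwald.

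\emph{Main obstacle.} The hard step is Case (ii). We must show that a nonzero $f_3$, or a nonzero quadratic part of $a$, forces $\phi$ to be Randers type. We would first differentiate $(\phi-z\phi_z)U=-\phi_r/(2r)$ in $z$, using $(\phi-z\phi_z)_z=-z\phi_{zz}$. We would then combine it with $\phi_{rz}=2rb\,\phi_{zz}$ to isolate $\phi_{zz}$ as a multiple of $(\phi-z\phi_z)/(\text{quadratic in }z)$. The resulting ODE
\[
(\alpha z^2+\beta z+\gamma)\phi_{zz}=c\,(\phi-z\phi_z)
\]
integrates explicitly, and its positive solutions satisfying $\Omega>0$ should be exactly the Randers ones.
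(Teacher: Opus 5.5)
Your set-up and your ``main obstacle'' step match the paper's proof. With $f_1=f_2=f_5=0$, $U=f_3z^2/2+f_4$, $a=f_7z^2/2+f_8$ and $b=f_6z-f_3z^3/2$, differentiating $(\phi-z\phi_z)U=-\phi_r/(2r)$ in $z$ and using $\phi_{rz}=2rb\,\phi_{zz}$ gives \eqref{eq:BcaseI}: $\phi_{zz}(f_6-f_4-f_3z^2)+f_3(\phi-z\phi_z)=0$. Unless $f_3=0$ and $f_4=f_6$, its admissible solutions are $K\sqrt{|f_6-f_4-f_3z^2|}+Dz$; there is no room for your constant $E$, since $E|\overline{y}|$ is not a one-form.

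The gap is the case division, and it is a real error, not just presentation. Your case (i) imposes $a\equiv 0$, but your own converse computation shows that \eqref{eq:Berwald02} gives $a=g_2z^2/2$. In fact $a=0$ together with the $s^0$ identity yields $\phi_{x^0}=2f_{10}\phi$ and $(\phi_{x^0}/z)_z=0$, hence $\phi_{x^0}=0$ for non-degenerate $\phi$. So case (i) only produces $g_2=0$. Your claim that a nonzero quadratic part of $a$ forces a Randers metric is false. Take any non-Randers Minkowski norm $u\psi(y^0/u)$. Then $\phi=\psi(e^{x^0}z)$ is locally Minkowski in the coordinate $\tau=e^{x^0}$, satisfies \eqref{eq:Berwald01}--\eqref{eq:Berwald02} with $g_1=0$, $g_2=1$, and has $a=z^2/2$. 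The correct dividing line is $f_3=0$, $f_4=f_6$, $f_8=f_{10}=0$ with $f_7$ free; then $g_1=f_4$ and $g_2=f_7$.

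What is missing is a second ODE for the subcase $f_3=0$, $f_4=f_6$. There \eqref{eq:BcaseI} collapses to $0=0$, so your elimination gives no information. The paper gets this ODE from the $x^0$-equations:
\begin{itemize}
\item differentiate the $s^0$ identity $z\phi_{x^0}=2f_{10}z\phi+\phi_z(f_7z^2+2f_8)$ in $z$;
\item subtract $z\phi_{x^0z}-\phi_{x^0}=\phi_{zz}(f_7z^2+2f_8)$, which gives $\phi_{x^0}=f_{10}\phi+(f_{10}+f_7)z\phi_z$;
\item substitute this back into the $s^0$ identity.
\end{itemize}
The coefficient $f_7$ drops out, and you obtain \eqref{eq:BCaseII}: $zf_{10}\phi=\phi_z(f_{10}z^2-2f_8)$. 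So either $\phi=K\sqrt{|f_{10}z^2-2f_8|}$ is Riemannian, or $f_8=f_{10}=0$ and $\phi_{x^0}=f_7z\phi_z$.

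Separately, the $\phi_r$-term in $W$ carries a plus sign, because $\varphi=z\phi_{x^0}+\frac{s}{r}\phi_r$ in Proposition \ref{prop2}. The minus sign in Corollary \ref{cor:phi_no_s} is a misprint; compare \eqref{eq:third}. Your $s^1$ identity should therefore be $\frac{\phi_r}{2r}-\phi U-\phi_z b=f_9\phi$, that is, $(f_3z^2+2f_4+f_9)\phi=z\phi_z(f_3z^2+f_4-f_6)$, not $-\phi_z(zU+b)=f_9\phi$. Your version would wrongly exclude the Riemannian metrics $\phi=\sqrt{z^2+h(r)}$. In case (2) the corrected identity gives $f_9=-2f_4$. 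That is exactly what your converse needs to show that $W=-2g_1s$ has the required form; there you checked $U$, $a$ and $b$, but not $W$.
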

\begin{proof} From Theorem \ref{Thm2:B0} (or Theorem \ref{Bcurv} and Corollary \ref{cor:phi_no_s}) we have that $f_1=f_2=f_5=0$, and
    \begin{align}
- \frac{1}{r}\phi_r&=2\left(\phi- z\phi_z\right)(f_3\frac{z^2}{2} + f_4),\label{eq:B001}\\
\frac{1}{r}\phi_{rz}&=2z\phi_{zz}(f_6-f_3\frac{z^2}{2}) \label{eq:B002}
\\
   z\phi_{x^0z} - \phi_{x^0}  &=2\phi_{zz}(f_7\frac{z^2}{2}+f_8),\label{eq:B003}\\
   z\phi_{x^0} &= 2\left[zf_{10}\phi +  \phi_z (f_7\frac{z^2}{2} + f_ 8)\right]\label{eq:B004}\\
   \frac{1}{r}\phi_r&=2\left[(f_9 + f_4 + f_3\frac{z^2}{2})\phi + z\phi_z(f_6-f_3\frac{z^2}{2})\right]\label{eq:B005}
\end{align}
Deriving \eqref{eq:B001} in relation to $z$ and adding with \eqref{eq:B002} we have
\begin{align}\label{eq:BcaseI}
    \phi_{zz}(f_6-f_4-f_3z^2) + (\phi-z\phi_z)f_3=0
\end{align}
which brands in two cases:

{\bf Case I:}  If $f_6-f_4-f_3z^2\neq 0$ (i.e. $f_3\neq 0$ and  $ f_4\neq f_6$).
 From \eqref{eq:BcaseI}, 
 \begin{align*}
     \phi &= -z\int \frac{K}{z^2\sqrt{|f_6-f_4-f_3z^2|}}dz\\
     &=\frac{K}{f_6-f_4}\sqrt{|f_6-f_4 - f_3z^2|} - f_{11}z
 \end{align*}
 which means that $F=u\phi$ is Randers type.

 {\bf Case II:} If $f_6-f_4-f_3z^2=0$ (i.e. $f_3= 0$ and  $ f_4= f_6$).
 Adding equation \eqref{eq:B001} with \eqref{eq:B005}, we conclude  that $f_9=-2f_4$. Adding \eqref{eq:B003} with the derivative of \eqref{eq:B004} in relation to $z$ and using \eqref{eq:B004} we have,
 \begin{align}\label{eq:BCaseII}
     zf_{10}\phi - \phi_z(f_{10}z^2-2f_8) = 0.
 \end{align}
 From equation \eqref{eq:BCaseII} we have two subcases,
 
 {\bf Case 2.1:} If $f_{10}z^2-2f_8\neq 0$, then solving \eqref{eq:BCaseII}, we conclude that $\phi $ it is of Riemannian type.

 {\bf Case 2.2:} If $f_8=f_{10}=0.$ Then, the system \eqref{eq:B001}-\eqref{eq:B005} reduces to
\begin{align*}
    -\frac{\phi_r}{r} &= 2(\phi-z\phi_z)f_4\\
    \phi_{x^0}&=z\phi_z f_7.
\end{align*}
\end{proof}

\begin{corollary}\normalfont Let $F= u \phi(x^0,r,z),$ be a Finsler metric defined on $I\times \mathbb{B}^n(\rho)$, $n\geq 3$, where $z=\frac{y^0}{u},$ $r=\vert \overline{x}\vert$, $s=\frac{\langle \overline{x},\overline{y}\rangle}{u}$ and $u=\vert \overline{y}\vert$ and $TM$ defined with coordinates \eqref{coordx}, \eqref{coordy}. Then $F$ has vanishing Landsberg curvature if, and only if, the positive function $\phi$ satisfies
\begin{align}
    \phi_z N_{z z z}+s \Omega U_{z z z}+\Omega W_{z z z}=0,\label{eq:Lands1}\\
    \phi_z N_{s z z}+\Omega W_{s z z}=0,\label{eq:Lands2}\\
    z \phi_z \Psi\left(\frac{N_z}{z}\right)+z s \Omega \Psi\left(\frac{U_z}{z}\right)+\Omega \Psi\left(W_z\right)=0, \label{eq:Lands3}\\
    z\phi_z\Psi\left(\frac{N_s}{z}\right) + \Omega\Psi\left(W_s\right)=0. \label{eq:Lands4}
\end{align}
\end{corollary}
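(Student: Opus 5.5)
The plan is to specialize Theorem \ref{Lcurv} to the case $\phi_s=0$. Then set $L_{ABC}=0$ component by component, in the adapted frame used in the proof of Lemma \ref{maintheo1}. Finally, show that only four scalar coefficients carry independent information.

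\textbf{Step 1: specialization.} With $\phi=\phi(x^0,r,z)$ we have $\Omega=\phi-z\phi_z$, and the weights in Theorem \ref{Lcurv} simplify to
\begin{align*}
r^2\phi_s+s\Omega=s\Omega,\qquad s\phi_s+\Omega=\Omega .
\end{align*}
Moreover, Corollary \ref{cor:phi_no_s} gives $U=-\frac{\phi_r}{2r(\phi-z\phi_z)}$, which is independent of $s$. Hence every term containing $U_s$, $U_{sz}$, $U_{szz}$, $U_{ss}$ and so on drops out. Since $\phi>0$, we have $L_{ABC}=0$ if and only if $\overline{L}_{ABC}=0$.

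\textbf{Step 2: extracting the four equations.} As in the proof of Lemma \ref{maintheo1}, I rotate by $O\in O(n)$ so that $\overline{x}=(r,0,\dots,0)$ and $\overline{y}$ lies in the span of $e_1,e_2$. Since $n\ge3$, the tensors $x^kx^l$, $(x^ku_l)_{\overrightarrow{kl}}$, $u_ku_l$ and $\delta_{kl}$ can be separated by evaluating at index pairs such as $(3,3)$, $(1,3)$ and $(1,1)$. The same holds for the cubic tensors in $\overline{L}_{jkl}$.
\begin{itemize}
\item $\overline{L}_{000}=0$ is exactly \eqref{eq:Lands1}.
\item The $x^l$-coefficient of $\overline{L}_{00l}$ gives \eqref{eq:Lands2}.
\item The $\delta_{kl}$-coefficient of $\overline{L}_{0kl}$ (read off from $\overline{L}_{033}$) gives \eqref{eq:Lands3}.
\item The $(\delta_{jk}x^l)_{\overrightarrow{jkl}}$-coefficient of $\overline{L}_{jkl}$ (read off from $\overline{L}_{133}$ after separating the $u_l\delta_{jk}$ part, as in Lemma \ref{maintheo1}) gives \eqref{eq:Lands4}. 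Here the term $z\phi_s\Psi(W/z)$ vanishes because $\phi_s=0$.
\end{itemize}
This proves necessity.

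\textbf{Step 3: sufficiency.} I must show that \eqref{eq:Lands1}--\eqref{eq:Lands4} force every other coefficient to vanish. I will not differentiate blindly. Instead I use the homogeneity identity $L_{ABC}y^C=0$, which holds because $G^A$ is $2$-homogeneous, so $\mathcal{B}^A_{BCD}y^D=0$. In our coordinates $y^0=uz$ and $y^l=uu_l$, and the contractions $x^lu_l=s$, $u_lu_l=1$ hold. Contracting with $y^l$ gives the following relations.
\begin{itemize}
\item From $\overline{L}_{00l}y^l+z\,\overline{L}_{000}=0$: the $u_l$-coefficient of $\overline{L}_{00l}$ is determined by the $x^l$-coefficient and $\overline{L}_{000}$, so it vanishes.
\item From $\overline{L}_{0kl}y^l+z\overline{L}_{00k}=0$: this expresses the $u_ku_l$- and $(x^ku_l)$-coefficients of $\overline{L}_{0kl}$ in terms of the $\delta_{kl}$- and $x^kx^l$-coefficients and of $\overline{L}_{00k}$.
\item The $x^kx^l$-coefficient is the $s$-derivative analogue of \eqref{eq:Lands2}. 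I handle it with identity \eqref{psieq2} together with the fact that $\phi_z$ and $\Omega$ do not depend on $s$.
\item The same contraction applied to $\overline{L}_{jkl}$ reduces its six coefficients to the $(\delta_{jk}x^l)$-coefficient, i.e.\ \eqref{eq:Lands4}, plus quantities already shown to vanish.
\end{itemize}
Throughout, the identities \eqref{psieq1}--\eqref{psieq3} and \eqref{eq:psi3I}--\eqref{eq:psi3II} let me rewrite the iterated $\Psi(z^2\Psi(\cdot))$ terms in terms of the first-order ones.

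\textbf{Main obstacle.} I expect the difficulty to lie in Step 3, specifically the pure $s$-derivative coefficients: the $x^kx^l$ term of $\overline{L}_{0kl}$ and the $x^jx^kx^l$ term of $\overline{L}_{jkl}$. Contraction with $y$ does not see these directly.
\begin{itemize}
\item First I would try to differentiate \eqref{eq:Lands2} and \eqref{eq:Lands4} in $s$. This works because $\phi_z$, $\Omega$ and $U$ are $s$-independent, and because $\Psi$ commutes appropriately with $\partial_s$ by \eqref{psieq2}.
\item If that fails, the fallback is to use the explicit $s$-dependence of $W$ and $N$ that comes from Corollary \ref{cor:phi_no_s}. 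There $W$ and $L$ are affine in $s$, and $N=z(W+sU)+L$ is therefore at most quadratic in $s$, which kills third $s$-derivatives outright.
\end{itemize}
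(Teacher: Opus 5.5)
Your proposal is correct, and its necessity half matches the paper's argument. The paper also works in the adapted frame, uses $U_s=N_{ss}=W_{ss}=0$, and reads \eqref{eq:Lands1}--\eqref{eq:Lands4} off $L_{000}$, $L_{002}$, $L_{033}$ and $L_{331},L_{332}$.

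The difference is in the converse. The paper checks each remaining coefficient by hand. It rewrites the iterated terms $\frac{1}{z}\Psi(z^2\Psi(\cdot))$ and $\frac{1}{z^2}\Psi(z^2\Psi(z^2\Psi(\cdot)))$ via \eqref{psieq1}, \eqref{eq:betaa1}, \eqref{eq:betaa2}, \eqref{eq:psi3I}, \eqref{eq:psi3II} as combinations of the four left-hand sides. You obtain the same linear relations at once from Euler's identity $\mathcal{B}^A_{BCD}y^D=0$, namely $\overline{L}_{00l}u_l+z\overline{L}_{000}=0$, $\overline{L}_{0kl}u_l+z\overline{L}_{00k}=0$ and $\overline{L}_{jkl}u_l+z\overline{L}_{0jk}=0$. (As written, your first contraction mixes $y^l$ with $z$; it should use $u_l$, or $y^l$ together with $y^0$.) The content is the same, since those $\Psi$-identities are the homogeneity relations written coefficientwise.

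Each route has its advantage. Yours explains structurally why only the pure-$s$ coefficients and the four quantities in \eqref{eq:Lands1}--\eqref{eq:Lands4} are free, and it avoids the third-order $\Psi$ bookkeeping. The paper's computation is purely algebraic. You must add one remark it does not need: $x^jx^k$, $(x^ju_k)_{\overrightarrow{jk}}$, $u_ju_k$, $\delta_{jk}$ (and their cubic analogues) are linearly independent for $n\ge 3$ when $\overline{x},\overline{y}$ are not parallel, and the coefficient relations then extend by continuity.

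For your main obstacle, the clean fix is the one the paper uses. Since $U$ is independent of $s$ and $W$, $L$ are affine in $s$, the function $N=z(W+sU)+L$ is affine in $s$, not merely quadratic. Hence $N_{ss}=W_{ss}=0$, which kills both pure-$s$ coefficients $\phi_zN_{ssz}+\Omega W_{ssz}$ and $\frac13(\phi_zN_{sss}+\Omega W_{sss})$ directly.

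Knowing only that $N$ is at most quadratic would not control $N_{ssz}$. Differentiating \eqref{eq:Lands2} in $s$ does not help either, since it produces $N_{sszz}$ rather than $N_{ssz}$. Differentiating \eqref{eq:Lands4} in $s$ would rescue your plan, because it gives $s(\phi_zN_{sss}+\Omega W_{sss})+z(\phi_zN_{ssz}+\Omega W_{ssz})+\Omega W_{ss}=0$.
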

\begin{proof}
Let the orthonormal matrix $O\in O(n)$ considered in  the proof of Theorem \ref{Thm2:B0}. From definitions of $U, N$ and $W$ in \eqref{def:U}, \eqref{def:N} and \eqref{def:W}, we have, $U_s=W_{ss}=N_{ss}=0$. Then, from $L_{000}=0$ we obtain \eqref{eq:Lands1}. From $L_{002}=0, $ we conclude in \eqref{eq:Lands2}. From $L_{033}=0, $ we obtain \eqref{eq:Lands3} and finally, from $L_{332}=L_{331}=0$ we obtain \eqref{eq:Lands4}. 

Conversely, suppose that equations \eqref{eq:Lands1}-\eqref{eq:Lands4} are satisfied; then, from \eqref{eq:Lands1}, \eqref{eq:Lands2} and \eqref{eq:Lands3} we have 
\begin{align}
    \phi_z\Psi(N_{zz}) + s\Omega\Psi(U_{zz}) + \Omega \Psi(W_{zz})&=0 \label{eq:I}
\end{align}
then, $L_{00l}=L_{000}=0$. 
From \eqref{eq:Lands3}, 
\begin{align*}
    z\phi_z\Psi\left(\frac{N_z}{z}\right) + sz\Omega \Psi\left(\frac{U_z}{z}\right) + \Omega \Psi(W_z) = 0.
\end{align*}

From \eqref{eq:Lands2} \begin{align}\label{eq:II}\phi_z\Psi(N_{sz}) + \Omega \Psi(W_{sz}) = 0\end{align}
Using \eqref{eq:betaa1}, \eqref{eq:betaa2}, \eqref{eq:I}, \eqref{eq:II} and \eqref{eq:Lands3} we have
\begin{align}
    \frac{1}{z}\phi_z\Psi\left(z^2\Psi\left(\frac{N_z}{z}\right)\right) + \frac{s}{z}\Omega\Psi\left(z^2\Psi\left(\frac{U_z}{z}\right)\right) + \frac{\Omega}{z^2}\Psi\left(z^2\Psi(W_z)\right) + 2\Omega\Psi(W_z)=\quad\label{eq:V}\\
    =-s(\phi_z\Psi(N_{sz} + \Omega \Psi(W_{sz}))) - \left(z\phi_z\Psi\left(\frac{N_z}{z}\right) + sz\Omega \Psi\left(\frac{U_z}{z}\right) + \Omega\Psi(W_z)\right)\nonumber\\
    -z\left(\phi_z\Psi(N_{zz}) + s\Omega\Psi(U_{zz})+ \Omega \Psi(W_{zz})\right) = 0\nonumber
\end{align}
Then, $L_{0kl}=0.$

From \eqref{eq:Lands4} we obtain, \begin{align}\label{eq:green}
z\phi_z\Psi\left(\frac{N_s}{z}\right) + \Omega \Psi(W_s) = 0.
\end{align}

Using \eqref{psieq1}, \eqref{eq:betaa1} and  \eqref{eq:Lands3}, \eqref{eq:Lands4}, \eqref{eq:green} we have

\begin{align}
\phi_z\Psi\left(z^2\Psi\left(\frac{N}{z^2}\right)\right) + s\Omega \Psi\left(z^2\Psi\left(\frac{U}{z^2}\right)\right) + \frac{\Omega}{z}\Psi\left(z^2\Psi\left(\frac{W}{z}\right)\right) + z\Omega \Psi\left(\frac{W}{z}\right)=\quad \label{eq:IV}\\
-s\left[z\phi_z\Psi\left(\frac{N_s}{z}\right) + \Omega \Psi(W_s)\right] - z\left[z\phi_z\Psi\left(\frac{N_z}{z}\right) + sz\Omega \Psi\left(\frac{U_z}{z}\right) + \Omega\Psi(W_z)\right] = 0\nonumber
\end{align}

From \eqref{eq:Lands4} and \eqref{eq:II},

\begin{align}
    \frac{\phi_z}{z}\Psi\left(z^2\Psi\left(\frac{N_s}{z}\right)\right) + \frac{\Omega}{z^2}\Psi\left(z^2\Psi(W_s)\right) + 2\Omega\Psi(W_s) =\qquad \label{eq:III}\\
    -z\phi_z\Psi\left(\frac{N_s}{z}\right) - \Omega\Psi(W_s) - z\left[\phi_z\Psi(N_{sz}) + \Omega\Psi(W_{sz})\right]=0.\nonumber
\end{align}

Finally, using \eqref{eq:psi3I}, \eqref{eq:psi3II}, \eqref{eq:IV}, \eqref{eq:III}, \eqref{eq:V}, and \eqref{eq:betaa1}, we have
\begin{align*}
\frac{\phi_z}{z^2}\Psi\left(z^2\Psi\left(z^2\Psi\left(\frac{N}{z^2}\right)\right)\right) + \frac{s\Omega}{z^2}\Psi\left(z^2\Psi\left(z^2\Psi\left(\frac{U}{z^2}\right)\right)\right) +\quad\\
+\frac{\Omega}{z^2}\Psi\left(z^2\Psi\left(z^2\Psi\left(\frac{W}{z}\right)\right)\right) + \frac{3\Omega}{z}\Psi\left(z^2\Psi\left(\frac{W}{z}\right)\right)=\\
-\left[2\phi_z\Psi\left(z^2\Psi\left(\frac{N}{z^2}\right)\right) + 2s\Omega \Psi\left(z^2\Psi\left(\frac{U}{z^2}\right)\right)\right]\quad \\ 
- \left[\frac{s\phi_z}{z}\Psi\left(z^2\Psi\left(\frac{N_s}{z}\right)\right) + \frac{s\Omega}{z^2}\Psi\left(z^2\Psi\left(W_s\right)\right)\right]\quad  \\
-\left[\phi_z\Psi\left(z^2\Psi\left(\frac{N_z}{z}\right)\right) + s\Omega \Psi\left(z^2\Psi\left(\frac{U_z}{z}\right)\right) + \frac{\Omega}{z}\Psi\left(z^2\Psi\left(W_z\right)\right)\right]=
\\
2\left[\frac{\Omega}{z}\Psi\left(z^2\Psi\left(\frac{W}{z}\right) - z\Omega\Psi\left(\frac{W}{z}\right)\right)\right]+ 2s\Omega \Psi\left(W_s\right) + 2z\Omega\Psi\left(W_z\right)  = 0,
\end{align*}
which prove $L_{jkl}=0.$

\textbf{Proof of  Theorem \ref{theo:main}:}
\end{proof}
Along the proof, $f_i:I\times \mathbb{R}\to \mathbb{R}$ are differentiable functions of $x^0$ and $r.$
 Deriving \eqref{eq:Lands3} in relation to $s$, we have
 $z(\phi_zN_{szz} + \Omega W_{szz}) + \Omega W_{sz} = 0$ and using \eqref{eq:Lands2} and \eqref{eq:Lands4} we conclude \begin{align}\label{eq:Wsz}
 W_{sz}&=0,& N_{szz}&=0, & z\Psi\left(\frac{N_s}{z}\right)=N_s-zN_{sz}&=0 
 \end{align} 
Deriving \eqref{eq:Lands1} in relation to $s$, we obtain $U_{zzz}=0$ and then there are $f_{i}=f_{i}(x^0, r), \, \, i=1,2,3$ such that 
\begin{align}\label{eq:U}
    U=f_{1}+f_{2} z+f_{3}\frac{z^2}{2}.
\end{align}

From  $N_{ss}=0$, $W_{ss}=0$ and \eqref{eq:Wsz}  there are $N_{0}=N_{0}(x^0, r,z)$, $W_{0}=W_{0}(x^0,r,z)$, $f_{4}=f_{4}(x^0, r)$, $f_{5}=f_{5}(x^0, r)$ such that 
\begin{align}
   N&=N_{0}+f_{4}sz. \label{eq:N_1} \\
   W&=W_0(x^0,r,z) + sf_5(x^0,r) \label{eq:N_2}
\end{align}

From \eqref{eq:Lands1} and \eqref{eq:Lands4} we obtain conditions on $N_{0}$ and $W_{0}$:
\begin{align}
    \phi_zN_{0zzz} + \Omega W_{0zzz} &= 0\\
    \phi_z(N_{0z} - zN_{0zz})-\Omega (z W_{0zz}) + \Omega f_3(x^0,r)&=0.
\end{align}

As in the proof of Theorem \ref{Thm2:B0} and Corollary  \ref{cor2}, we have that $\phi>0$ must satisfy

\begin{align}
    -\frac{1}{r}\phi_r &= 2(\phi-z\phi_z)(f_1+f_2z+f_3z^2/2)\label{eq:first}\\
    z\phi_{x^0z} - \phi_{x^0}  &=2\phi_{zz}(N_0-zW_0)\label{eq:second}\\
    \frac{\phi_{rz}}{r}&=2z\phi_{zz}(f_4-f_5-f_1-f_2z-f_3z^2/2)\label{eq:thirt}\\
    z\phi_{x^0}&=2[W_0\phi + \phi_z(N_0-zW_0)]\label{eq:forth}\\
    \frac{\phi_r}{r}&=2(\phi-z\phi_z)(f_5+f_1+f_2z+f_3z^2/2)+2z\phi_zf_4 \label{eq:fifth}
    \end{align}
    with the additional conditions,
    \begin{align}
        \phi_zN_{0zzz} + \Omega W_{0zzz} &= 0 \label{eq:additonal1}\\
    \phi_z(N_{0z} - zN_{0zz})-\Omega (z W_{0zz}) + \Omega f_3&=0.\label{eq:additonal2}
\end{align}

Adding and subtracting \eqref{eq:first} and \eqref{eq:fifth}
\begin{align}
    0&=(\phi-z\phi_z)(2f_1+f_5 + 2f_2z + 2f_3\frac{z^2}{2}) + z\phi_zf_4\\
    \frac{1}{r}\phi_r&=(\phi-z\phi_z)f_5 + z\phi_z f_4
\end{align}
Subtracting them,
\begin{align}
    \frac{1}{r}\phi_r=-2(\phi-z\phi_z)(f_1+f_2z + f_3\frac{z^2}{2})
\end{align} 
 deriving in relation to $z$ and comparing with \eqref{eq:thirt}, we have

\begin{align}\label{eq:F}
-z\,\phi_{zz}
\Bigl(
f_4 - f_5 - 2f_1 - 2f_2 z - f_3 z^2
\Bigr)
=
\Omega\,(f_2 + f_3 z).
\end{align}

{\bf Case 1:} $f_4 - f_5 - 2f_1 - 2f_2 z - f_3 z^2\neq 0$

\begin{align}
    \frac{\Omega_z}{\Omega} = \frac{f_3+f_4z}{f_1 - f_5 - 2f_2 - 2f_3 z - f_4 z^2}
\end{align}
integrating in $z$ and using the identity $\Omega = -z^2\left(\frac{\phi}{z}\right)_z$ we conclude that $F$ is Randers form. 

{\bf Case 2:} $ f_4 - f_5 - 2f_1 - 2f_2 z - f_3 z^2 =  0$ ($f_2=f_3=0$ and $f_4 - f_5 - 2f_1=0$)
From \eqref{eq:first} and \eqref{eq:fifth} we obtain
\[
f_4 = 2f_1 + f_5 = 0.
\]
Using the conditions \eqref{eq:additonal1}--\eqref{eq:additonal2}, together with \( f_3 = 0 \) and assuming
\( N_{0zzz} \neq 0 \) (which implies \( N_{0z} - z N_{0zz} \neq 0 \)), we deduce that
\begin{align}
\frac{\phi_z}{\Omega}
= -\,\frac{W_{0zzz}}{N_{0zzz}}
= \frac{z W_{0zz}}{N_{0z} - z N_{0zz}} .
\end{align}
Consequently, there exists a function \( f_6 = f_6(x^0,r) \) such that
\begin{align}\label{eq:W0zzf6}
W_{0zz} = \pm f_6 \bigl( N_{0z} - z N_{0zz} \bigr).
\end{align}
Substituting \eqref{eq:W0zzf6} into \eqref{eq:additonal1} (or equivalently into
\eqref{eq:additonal2}), we obtain
\begin{align*}
\phi_z \mp z \Omega f_6 &= 0, \\
\pm z \phi f_6 - \phi_z \bigl( 1 \pm z^2 f_6 \bigr) &= 0.
\end{align*}
Then, there exist $f_7=f_7(x^0,r)$ such that
\begin{align}
    \phi = f_7 \sqrt{|1\pm f_6 z^2|}
\end{align}
 which means that $F=u\phi$ is Riemannian.

 On the other hand, if $N_{0z} - z N_{0zz}=0$, then,
 \begin{align}
     N_{0}=f_8 + f_9\frac{z^2}{2}\\
     W_{0}=f_{10} + f_{11}z
 \end{align}
Deriving \eqref{eq:forth} in relation to $z$, and subtracting \eqref{eq:second}, we have, $ \phi_{x^0} = f_{11}\phi +  z(f_9 -f_{11})\phi_z $. Thus, from \eqref{eq:forth},
\begin{align}\label{eq:landsnonberw}
    [f_{11}z + 2f_{10}]\phi =[f_{11}z^2+2f_{10}z - 2f_8]\phi_z.
\end{align}
From this, we consider two sub-cases:

{\bf Case 2.1:} If $f_{11}z^2+2f_{10}z - 2f_8=0$, then $f_{11}=0, \, f_{10}=0, \, f_8=0$, which reduces the Landsberg system given in Corollary 3 into,
\begin{align*}
    -\frac{1}{r}\phi_r &= 2(\phi-z\phi_z)f_1\\
\phi_{x^0}&= z\phi_z f_9
\end{align*}
which means that $F=u\phi$ is Berwald metric by Theorem \ref{cor2}.

{\bf Case 2.2:} If $f_{11}z^2+2f_{10}z - 2f_8 \neq 0$.

From \eqref{eq:landsnonberw} we obtain
\begin{align}\label{eq:lastcase}
\frac{\phi_z}{\phi}
=
\frac{f_{11} z + 2 f_{10}}{f_{11} z^2 + 2 f_{10} z - 2 f_8}.
\end{align}

Considering $\Delta=-f_{10}^2-2 f_{11} f_8>0,$
and integrating \eqref{eq:lastcase} with respect to $z$, we  obtain
\begin{align}
    \phi = \sqrt{| f_{11}z^2+2f_{10}z - 2f_8|} \times e^{f_{12} + \int_0^z \frac{f_{10}}{f_{11}z^2+2f_{10}z - 2f_8}dz}
\end{align}
or
\begin{align}\label{eq:philastcase}
\displaystyle\phi
&=
k(x^0,r)\;
\sqrt{\zeta^2+\alpha^2}\;
\times e^{
\beta\,\arctan\!\left(\frac{\zeta}{\alpha}
\right)},
\end{align}
where,
\begin{align}
\zeta&=z+\frac{f_{10}}{f_{11}}=z+\alpha\beta&
\alpha &= \frac{\sqrt{\Delta}}{f_{11}},& 
\beta &= \frac{f_{10}}{\sqrt{\Delta}}.
\end{align}
\qed

\begin{remark}\normalfont
The function $F=u\phi$, where $\phi$ is given by \eqref{eq:philastcase}, is not regular on $TM\setminus\{0\}$.

Indeed, assume by contradiction that $F$ is regular on $TM\setminus\{0\}$. Fix $y^0=1$ and consider $\overline{y}=(t,0,\ldots,0)$ with $t\in\mathbb{R}\setminus\{0\}$. Then the function
\[
\theta(t)=\vert t\vert\phi(x^0,r,|t|^{-1})
\]
should be smooth at $t=0$.

For simplicity, denote
\[
k=k(x^0,r),\qquad
M(t)=\alpha^2(1+\beta^2)t^2+2\alpha\beta|t|+1,\qquad
N(t)=\beta\arctan\!\left(\frac{1}{\alpha|t|}+\beta\right).
\]
Then
\begin{equation}\label{Fnreg1}
    \theta(t)=k\sqrt{M(t)}\,e^{N(t)}.
\end{equation}

Note that
\[
\lim_{t\to0}\theta(t)=k\,e^{\beta\pi/2}.
\]

Differentiating \eqref{Fnreg1}, we obtain
\begin{equation}\label{Fnreg2}
    \theta'(t)
    =
    \alpha^2(1+\beta^2)\frac{t\,\theta(t)}{M(t)}.
\end{equation}

Differentiating \eqref{Fnreg2}, we get
\begin{equation}\label{Fnreg3}
    \theta''(t)
    =
    \alpha^2(1+\beta^2)\frac{\theta(t)}{M(t)^2}.
\end{equation}

Differentiating once more, we obtain
\begin{equation}\label{Fnreg4}
\theta'''(t)
=
\alpha^2(1+\beta^2)\frac{\theta(t)}{M(t)^3}
\left[
-3\alpha^2(1+\beta^2)t
-4\alpha\beta\frac{d}{dt}(|t|)
\right].
\end{equation}

Finally, observe that $\theta'''(t)$ does not admit a limit as $t\to0$, since
\[
\lim_{t\to0^+}\theta'''(t)
=
\alpha^2(1+\beta^2)\,k\,e^{\beta\pi/2}\,(-4\alpha\beta),
\]
while
\[
\lim_{t\to0^-}\theta'''(t)
=
\alpha^2(1+\beta^2)\,k\,e^{\beta\pi/2}\,(4\alpha\beta).
\]

Therefore, $\theta$ is of class $C^2$ but not $C^3$ at $t=0$, which contradicts the regularity of $F$. Hence, $F$ is not regular on $TM\setminus\{0\}$.
\end{remark}
\section{Non Belwardian pseudo Finsler metric with vanishing Landsberg curvature}

From \eqref{eq:first} ($  -\frac{1}{r}\phi_r = 2(\phi-z\phi_z)f_1$) and \eqref{eq:lastcase} we obtain
\begin{align}\label{eq:phir/r}
    \frac{1}{r}\frac{\phi_r}{\phi} = \frac{4f_1f_8}{f_{11} z^2 + 2 f_{10} z - 2 f_8}.
\end{align}
Using \eqref{eq:philastcase},
\begin{align}\label{eq:phirsobr}
    \frac{1}{r}\left(Q\frac{k_r}{k} + f_{11}(\zeta \zeta_r + \alpha\alpha_r + \beta(\alpha \zeta_r-\zeta \alpha_r)) + Q\beta_r \arctan\left(\frac{\zeta}{\alpha}\right)\right) = 4f_1f_8,
\end{align}
where, $Q= f_{11}z^2+2f_{10}z - 2f_8$. Deriving \eqref{eq:phirsobr} in relation to $z$, we obtain
\begin{equation}\label{eqz:phirsobr}
    (2f_{11}z+2f_{10})\left[\dfrac{k_r}{k}+\beta_r\arctan\left(\frac{\zeta}{\alpha}\right)\right]+f_{11}[\zeta_r+\alpha\beta_r-\beta\alpha_r]=0.
\end{equation}
Deriving \eqref{eqz:phirsobr} in relation to $z$ and since $f_{11}\neq0$, we obtain
\begin{equation}\label{eqzz:phirsobr}
    \dfrac{k_r}{k}+\beta_r\left[(f_{11}z+f_{10})\frac{\alpha}{Q}+\arctan\left(\frac{\zeta}{\alpha}\right)\right]=0.
\end{equation}
Deriving \eqref{eqzz:phirsobr} in relation to $z$, we obtain
\[\beta_r\left[f_{11}\frac{\alpha}{Q}-(f_{11}z+f_{10})^2\frac{\alpha}{Q^2}\right]=0,\]
or equivalently 
\begin{equation}\label{eqzzz:phirsobr}
\beta_r\alpha\Delta=0.
\end{equation}
Since $\alpha,\Delta\neq0$, by \eqref{eqzzz:phirsobr}, we have \begin{equation}\label{eq:betar0}
    \beta_r=0.
\end{equation} Now, replacing \eqref{eq:betar0} in \eqref{eqzz:phirsobr}, we obtain $k_r=0$. Consequently, by \eqref{eq:phirsobr}, we have 
\begin{equation}\label{eq:f10f8f11}
    \frac{f_{11}}{r}\left[\zeta \zeta_r + \alpha\alpha_r + \beta(\alpha \zeta_r-\zeta \alpha_r)\right]= 4f_1f_8.
\end{equation}
On the other hand, by \eqref{eq:betar0}, we have
\[\left(\dfrac{f^2_{10}}{f_8f_{11}}\right)_r=0,\]equivalently,
\[\dfrac{f^2_{10}}{f_8f_{11}}=2\eta(x^0).\]
With this, $\Delta = -2f_8f_{11}(1+\eta(x^0)), \quad \beta^2 =-\frac{2\eta(x^0)}{1+\eta(x^0)}, \quad \alpha^2 = -2\frac{f_8}{f_{11}}(1+\eta(x^0)) $. Then, using $\zeta - \alpha\beta= z$, equation \eqref{eq:f10f8f11} is reduced to,
\begin{align*}
    \frac{f_{11}}{r}\alpha \alpha_r(1+\beta^2) &= 4f_1f_8,\\
    -\frac{1}{r}\frac{f_{11}}{f_8}\left(\frac{f_8}{f_{11}}\right)_r(1-\eta(x^0))&=4f_1,\\
    f_8&=-f_{11}e^{-\frac{\int4rf_1\,dr}{1-\eta(x^0)}}.
\end{align*}

Thus, if $F=u\phi$ is Landsberg type, then
\begin{align}\label{eq:phinonBe}
\displaystyle\phi
&=
k(x^0)\;
\sqrt{\zeta^2+\alpha^2}\;
\times e^{
\beta\,\arctan\!\left(\frac{\zeta}{\alpha}
\right)},
\end{align}
with, 
\begin{align} 
\beta_{r} &= \left[\frac{f_{10}}{\sqrt{\Delta}}\right]_r=0.
\end{align}

On the other hand, if $F=u\phi$, where $\phi$ is given by \eqref{eq:phinonBe}, then it is Berwald type if $\phi$  satisfies $\frac{\phi_{x^0}}{\phi} = z\frac{\phi_z}{\phi} f_9$ for some $f_9=f_9(x^0,r)$. Then, from \eqref{eq:lastcase}, \eqref{eq:phinonBe}, and $\zeta \beta +\alpha = z\beta - \frac{f_8}{\sqrt{\Delta}}$,
\begin{align}\label{eq:LnonB}
    \frac{k'}{k} + \frac{\alpha}{\alpha^2+ \zeta^2}\left[z\beta  - \frac{f_8}{\sqrt{\Delta}}\right]_{x^0} + \beta_{x^0}\arctan\left(\frac{\zeta}{\alpha}\right) = zf_9\frac{\zeta + \alpha\beta}{\zeta^2+\alpha^2},
\end{align}
 multiplying equation \eqref{eq:LnonB} both of sides by $\zeta^2+\alpha^2$, deriving twice in $z,$ and using $\left[\zeta-\alpha \beta\right]_{x^0} = 0$,

 \begin{align*}
     \frac{k'}{k} + \beta_{x^0}\arctan\left(\frac{\zeta}{\alpha}\right) + \frac{\zeta\alpha\beta_{x^0}}{\zeta^2 + \alpha^2}=f_9.
 \end{align*}
 Deriving in relation to $z$, we conclude that $\beta_{x^0}=0,$ and then $f_9=\frac{k'}{k}.$ With this, from \eqref{eq:LnonB}, we have the condition on $f_8, f_9$ and $f_{10}$: 
 \begin{align*}
     \left[\frac{f_8}{f_{10}}\right]_{x^0} + 2f_9\left[\frac{f_8}{f_{10}}\right]=0.
 \end{align*}

\end{document}